\theoremstyle{plain}
\newtheorem{theorem}{Theorem}[section]
\newtheorem{lemma}[theorem]{Lemma}
\newtheorem{corollary}[theorem]{Corollary}
\newtheorem{proposition}[theorem]{Proposition}
\theoremstyle{definition}
\newtheorem{example}[theorem]{Example}
\newtheorem{remark}[theorem]{Remark}
\newtheorem{Assumptions}[theorem]{Assumptions}
\numberwithin{equation}{section}
\numberwithin{figure}{section}
\begin{document}

\title[Topology change of levels sets in Morse theory]
{Topology change of level sets in\protect\\ Morse theory}

\author{Andreas Knauf and Nikolay Martynchuk}
\thanks{
Department of Mathematics,
Friedrich-Alexander-University Erlangen-N\"urnberg,
Cauerstr.\ 11, D-91058 Erlangen, 
Germany, \texttt{\{knauf, martynchuk\}@math.fau.de}}
  
\begin{abstract}
Classical Morse theory proceeds by considering sublevel
sets $f^{-1}(-\infty, a]$ of a Morse function $f \colon M \to \mathbb R$, where $M$ is a smooth finite-dimensional manifold. In this paper, 
we study the topology of the level sets $f^{-1}(a)$ and give conditions under which 
the topology of $f^{-1}(a)$ changes when passing a critical value. 
We show
that for a general class of functions, which includes all exhaustive Morse function, the topology of
a regular level $f^{-1}(a)$ always changes when passing a single critical point, unless the index of the critical point
is half the dimension of the manifold $M$. When $f$ is a natural Hamiltonian on a cotangent bundle, we obtain more precise results in terms of 
the topology of the configuration space. (Counter-)examples and applications to celestial mechanics are also discussed.
\end{abstract}

\keywords{Invariant Manifolds, Hamiltonian and Celestial Mechanics, Morse Theory, 
Surgery Theory, Vector Bundles}

\subjclass[2010]{37N05, 55R25, 57N65, 57R65, 58E05, 70F10, 70H33}

\maketitle

\section{Introduction and notation}

Let $M$ be a smooth $m$-dimensional manifold without boundary (in this paper, 
we consider only separable and metrizable manifolds). We recall that a 
function $f\in C^2(M,{\mathbb R})$ is called a Morse function if for every critical point 
$x \in M$ of $f$, the Hessian
$${\rm Hess}f(x):T_{x}M\times T_{x}M\to {\mathbb R}$$
is non-degenerate. One defines the index of a critical point by ${\rm index}(f,x):=\dim(V)$, where $V\subseteq T_{x}M$
is a subspace of maximal dimension on which ${\rm Hess}f(x)$ is negative definite.

The classical Morse theory (see \cite{Mi, Ni} for background material) proceeds by considering {\em sublevel sets}
\begin{equation}
M^b:=\{x\in M\mid f(x)\le c\} \ \ \mbox{ and } \ \  M_a^b:=\{x\in M\mid a\le f(x)\le b\}.
\label{sublevel}
\end{equation}
Under standard compactness assumptions (for instance, if all the sets $M^b_a$ are compact; see Section~\ref{sec:level_sets}), for regular values $a < c$ of $f$, 
the sublevel sets $M^a$ and $M^b$ are diffeomorphic if $M_a^b$ contains no critical points. 
If there is one critical
point $x$ in the interior of $M_a^b$, then $M^b \cong M^a \cup H^m_k$, with the handle 
$H^m_k:=D^k\times D^{m-k}$ of index $k:= {\rm index}(f,x)$ attached. In this case, $M^b$ is not homotopy equivalent to $M^a$ if $M$ is compact, see \eqref{PPPQ} below.

In the present paper we are interested in the topology of the {\em level sets} $f^{-1}(a)= \partial M^a$. First we note that
$\partial M^b$ and $\partial M^a$ are always diffeomorphic if $M_a^b$ contains no critical points. If
$M_a^b$ contains some critical points, then both scenarios are possible. 
Simple examples (like Ex.\ \ref{ex:nonor} below) show that for regular values $a<b$,  
$\partial M^b$ may be diffeomorphic to $\partial M^a$ even when $M_a^b$ contains a single critical point.
A natural question is to understand when the {\em topology does change} 
(in the rough sense that $H_\ell(\partial M^b,G)\neq H_\ell(\partial M^a,G)$ 
for some abelian group $G$ and some $\ell\in{\mathbb N}_0$)
when the function $f$ passes a critical level.

We will develop criteria to answer this question in many cases.
More specifically, in Section \ref{sec:level_sets} we consider level sets of abstract Morse functions, which satisfy 
the so-called Palais-Smale condition \cite{PS} and for which the level sets have finitely generated homology groups; see Assumptions~\ref{Assumptions}.  We show that 
for such a function $f$, the topology of
$f^{-1}(a)$ changes when passing a single critical point, if
the index $k$ of the critical point
is different from $m/2$, where $m$ is the dimension of the manifold $M$. We also consider the case of
several critical points on a given critical level and prove the topology change under a certain assumption
on the indices of these critical points. Specifically, it turns out that the topology always changes when passing a given critical level if it contains  a 
critical point of index $k \ne m/2$ 
such that there exists no other critical point 
of index $k-1, k+1$ or $m-k$; see Theorem~\ref{theorem:level2}.

The results obtained in Section~\ref{sec:level_sets} are quite general; they apply to functions which do not have to be everywhere Morse (it is sufficient to assume non-degeneracy  
in a small neighborhood of a given critical level), and the level sets do not have to be compact. The crucial assumption that is necessary
is the existence of a critical point whose index is not half the dimension of the underlying manifold. If the index is in the middle dimension, then, at least on the level of Morse function on abstract 
manifolds, both outcomes are possible.
Nonetheless, it is natural to ask for criteria which guarantee the topology change also in this situation. In  Section \ref{sec:natural}, we consider the case when the manifold
$M$ is a rank $n$ vector bundle over an $n$-manifold $N$, and the function $f$ is a fiberwise positive definite quadratic form. 
The most important such case is the one of a Hamiltonian function 
\begin{equation}
H:M = T^*N\to {\mathbb R}\quad,\quad H(q,p)= K(p-A(q))+V(q)
\label{Hamilton}
\end{equation} 
on the cotangent bundle of a configuration manifold $N$; here $K$ and $V$ are the system's kinetic and potential energy, and $A$ is the magnetic potential. In this case, we show that the topology of $H^{-1}(h)$ always changes when passing 
a single critical point if the Euler characteristic of the configuration space $N$ is different from $\pm 1$. In the case of 
abstract vector bundles, we obtain a similar result in terms of the Euler number. We note that the Euler number
plays an important role also in the context of classification of integrable Hamiltonian systems with two degrees of freedom 
\cite{FZ, BF} and monodromy of such systems \cite{EM, ME, MBE}. In Section \ref{sec:natural} we also consider
the special case when $N$ is an $n$-sphere. Because of Adams' result on the Hopf invariant one problem \cite{Ada}, it follows that in this case, no topology change is possible only in
dimensions $n = 2, 4$ and $8$.

The problem that we address in this paper was raised by A. Albouy, in connection with the $n$-body problem and the topology of the corresponding integral 
manifolds; see \cite{Sm1, Sm2, Ala}.  The main question is
whether for this problem, the topology of the integral manifolds always changes when passing through a bifurcation level.
This is obvious for the two levels of the Kepler problem with a given nonzero value of angular momentum. 
As shown in \cite{MMW}, this is the case also for $n=3$ celestial bodies. In Section~\ref{section/nbody_problem}, we 
show that this is also
true for the planar $n$-body problem, provided that the reduced Hamiltonian is a Morse function having at most two critical points
on each level set; cf.  \cite{McC}. Two other examples from classical mechanics, which illustrate the theory, are also discussed.

The paper is concluded with the Appendix,
where details of some proofs and a few miscellaneous results are given.

\section{Level sets of Morse functions} \label{sec:level_sets}

Let $f\in C^2(M,{\mathbb R})$ be a Morse function on a manifold $M$ and let $m := \dim(M) > 0$ denote the dimension of the manifold. 
It is not difficult to see that two level sets $f^{-1}(a)$ and $f^{-1}(b)$ may be diffeomorphic even if there are critical values
in the subinterval $[a,b]$. Indeed, one can take a closed manifold $M$ and a Morse function $f$ on this manifold
that has unique global minimum and maximum points $x_{\min}$ and $x_{\max}$; then 
$f^{-1}(f(x_{\max})-\varepsilon)$
and $f^{-1}(f(x_{\min})+\varepsilon)$ are diffeomorphic to a sphere $S^{m-1}$. 

This is also possible if between the levels there is only one critical level with 
a single critical point:
\begin{example}[No topology change of level sets]\label{ex:nonor}\quad\\ 
Consider the perfect Morse function $f=\tilde{f}\circ \pi^{-1}:{\mathbb R}{\rm P}(2)\to {\mathbb R}$, 
induced by
\[
\tilde{f} : S^2\subseteq {\mathbb R}^3\to {\mathbb R}\quad,\quad 
\textstyle \tilde{f}(x)=\sum_{k=1}^3 k\,|x_k|^2, \]
with projection $\pi \colon  S^2\to {\mathbb R}{\rm P}(2)\cong S^2/S^0$.  
Then 2 is a critical value of $f$ with the unique critical point $\pm(0\ 1\ 0)$, and
$f^{-1}(2+\varepsilon) \cong S^1 \cong f^{-1}(2-\varepsilon)$.
Note that a similar phenomenon arises for a perfect Morse function $f$ on the complex projective space ${\mathbb C}{\rm P}(2) \cong S^5/S^1$, 
with level sets diffeomorphic to the sphere $S^3$. So it is not caused by lack of orientability.

More generally there is no topology change  for $m$ even at the level of the critical point with 
index $m/2$ of a perfect Morse function on ${\mathbb R}{\rm P}(m)$, respectively index $m$ for 
${\mathbb C}{\rm P}(m)$.
\hfill$\Diamond$
\end{example}
So we consider the following general question:
{\it 
How does the homology of level sets of a Morse function $f:M\to {\mathbb R}$ 
on an $m$-dimensional manifold $M$ change when passing a critical value $c\in {\mathbb R}$?}

First, consider the more usual case of sublevel sets \eqref{sublevel} and assume, for the moment, that 
$f$ is {\em exhaustive}, that is, that all the sublevel sets 
$$M^a = f^{-1}(-\infty,a]$$ are compact.
Consider $a<c<b$ such that there is only one critical point $x_c\in f^{-1}(c)$ of $f$
in $M_a^b$.  
For $k:={\rm index}(f,x_c)$, 
\begin{enumerate}[1.]
\item 
$M^b$ is {\em homotopy equivalent} to $M^a$ with a $k$-cell $D^k$ attached.
\item 
$M^b$ is {\em diffeomorphic} to $M^a$ with an $m$-dimensional handle 
$H^m_k:=D^k\times D^{m-k}$ of index $k$ attached, see \cite[Thm.\ 17.5]{DFN}.
Note that this uses as data the embedding $S\to \partial M^a$
of a sphere $S:=S^{k-1}$ with trivial normal bundle $T_S^\perp (\partial M^a)$ 
and, see \cite[page 24]{Ni}, a bundle isomorphism 
\[\varphi: S\times {\mathbb R}^{m-k} \to T_S^\perp (\partial M^a).\]
\end{enumerate}
Then by excision \cite[Sect.\ 2.3]{Ni}, one has
\[H_\bullet(M^b, M^a)\cong H_\bullet(D^k,\partial D^k).\]
Here $H_\bullet(X, A) \equiv H_\bullet(X,A, {\mathbb F})$ denotes the relative homology chain
complex of a pair $(X,A)$. (Unless stated otherwise, the coefficients are in a field $\mathbb F$.)
By subadditivity  \cite[Lemma 2.14]{Ni} 
for the long exact homological sequence of the pair $(M^b, M^a)$,
one gets the inequality
\[P(M^a)+P(M^b,M^a)\succeq P(M^b)\]
of Poincar\'e polynomials, with relative Poincar\'e polynomial $P(M^b,M^a)(t)=t^k$. 
The symbol $\succeq$ means existence of a polynomial $Q$ with nonnegative coefficients and
\begin{equation}
 \big(P(M^a) + P(M^b,M^a) - P(M^b)\big)(t) = (1+t)Q(t).
 \label{PPPQ}
 \end{equation}
This implies, in particular, that $P(M^a)\neq P(M^b)$. 

Hence, if $f$ is exhaustive (in particular, if $M$ is compact) and $M_a^b$ contains a critical level $f^{-1}(c)$ with one 
critical point, then $M^a$ is not homotopy equivalent to $M^b$. As we have seen above, for the level sets $\partial M^a=f^{-1}(a)$ this is no longer the case.

We note that if $M$ is non-compact, it may happen that the level and sublevel sets change their topology even in the absence of critical points.
This phenomenon occurs, for instance, in the $3$-body problem and in that case is due to the so-called critical points at infinity \cite{Ala}. To avoid this situation, but 
to include a large class of functions on non-compact manifolds,
we shall assume throughout this paper that  $f$ satisfies the following {\bf assumptions}; cf. \cite{PS}.

\begin{Assumptions} \label{Assumptions}
1.  There exists a Riemannian metric on $M$ such that $M$ is complete with respect to this metric and 
 for every $S \subset M$ on which $|f|$ is bounded, but the norm  $\|\textup{grad} f\|$ is not bounded away from zero, there is a critical point of $f$ in the closure of $S$.\\[1mm]
 2. The integer homology groups of each level set $\partial M^b = f^{-1}(b)$ are finitely generated.
\end{Assumptions}

\begin{remark}
The first assumption is known as the  Palais-Smale condition \cite{PS}. Under this condition,  the usual Morse theory applies, even if the level sets 
$f^{-1}(a)$ are not compact. The second condition is technical and is needed for dimension counting. It 
implies, for instance, that homology groups with coefficients in a field $\mathbb F$ are finite-dimensional; 
see Proposition~\ref{proposition/hom_change}.
We note that when $f$
is a {\em proper} function, that is,
 when all the sets $M^b_a = f^{-1}[a,b]$ are compact (in particular, when $f$ is exhaustive),
 Assumptions~\ref{Assumptions} are satisfied. Non-compact examples
 can be found in Sections~\ref{sec:natural} and \ref{section/nbody_problem}.
\end{remark}

We will need the following proposition, where the above notation is understood.

\begin{proposition} \label{proposition/hom_change}
Let $x_c$ be a non-degenerate critical point of $f$ with $k = {\rm index}(f,x_c) < m/2$ and let $\mathbb F$ be a field. 
Assume that values $a$ and $b$ are chosen such that $x_c$ is the only critical point in $M^b_a.$
Then we have (under Assumptions~\ref{Assumptions})
 $$\dim H_{\ell}(\partial M^b, \mathbb F) = \dim H_{\ell}(\partial M^a, \mathbb F) +j_\ell,$$ 
where $j_\ell$ is an integer such that

$1)$ $j_\ell = 0$ if $\ell \notin \{k-1, k, m-k-1, m-k, m-2, m-1\}$;

$2$) $j_{k-1}j_k = 0$ and, moreover,
\begin{itemize}

\item[$\ 2.a$)] $j_{k-1} = 0$ implies $j_{k} \in \{1,2\}$; if $2k \ne m-1$, then $j_{k} = 1;$

\item[$\ 2.b$)] $j_{k} = 0$ implies $j_{k-1} = -1$. 
\end{itemize}
\end{proposition}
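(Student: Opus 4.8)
The plan is to treat the region $M_a^b$ as a cobordism $W$ between the two closed level sets $\partial M^a$ and $\partial M^b$, and to read off everything from the two long exact homology sequences attached to the handle. By classical Morse theory (the handle-attachment recalled in item~2 of the discussion above), $W$ is diffeomorphic to $\partial M^a\times[0,1]$ with a single handle $H^m_k=D^k\times D^{m-k}$ attached along the attaching sphere $S^{k-1}\subset\partial M^a$; hence $W$ deformation retracts onto $\partial M^a$ with the core cell $D^k$ glued on, and excision gives $H_\ell(W,\partial M^a)\cong H_\ell(D^k,S^{k-1})$, which is $\mathbb F$ for $\ell=k$ and $0$ otherwise. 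Turning the handle upside down (replacing $f$ by $-f$, so that the critical point acquires index $m-k$ and $\partial M^b$ becomes the lower boundary), the same reasoning yields $H_\ell(W,\partial M^b)\cong H_\ell(D^{m-k},S^{m-k-1})$, equal to $\mathbb F$ for $\ell=m-k$ and $0$ otherwise; equivalently this follows from Poincar\'e--Lefschetz duality on $W$. Throughout, homology is taken with coefficients in $\mathbb F$, and by Assumptions~\ref{Assumptions} every group is finite-dimensional.

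Next I would feed these two computations into the long exact sequences of $(W,\partial M^a)$ and $(W,\partial M^b)$. Since $H_\bullet(W,\partial M^a)$ is concentrated in degree $k$, the sequence forces $H_\ell(\partial M^a)\cong H_\ell(W)$ for all $\ell\notin\{k-1,k\}$, the only nontrivial part being controlled by the connecting homomorphism $\partial_A\colon H_k(W,\partial M^a)=\mathbb F\to H_{k-1}(\partial M^a)$, whose rank I call $r_A\in\{0,1\}$. A short diagram chase then gives $\dim H_{k-1}(\partial M^a)=\dim H_{k-1}(W)+r_A$ and $\dim H_k(\partial M^a)=\dim H_k(W)-1+r_A$. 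Symmetrically, the sequence of $(W,\partial M^b)$ gives $H_\ell(\partial M^b)\cong H_\ell(W)$ for $\ell\notin\{m-k-1,m-k\}$, with a rank $r_B\in\{0,1\}$ governing degrees $m-k-1$ and $m-k$ through $\dim H_{m-k-1}(\partial M^b)=\dim H_{m-k-1}(W)+r_B$ and $\dim H_{m-k}(\partial M^b)=\dim H_{m-k}(W)-1+r_B$. Comparing both sides with $H_\ell(W)$ yields $j_\ell=0$ whenever $\ell\notin\{k-1,k,m-k-1,m-k\}$, a set contained in that of item~1); this proves 1).

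For the generic case $2k\ne m-1$ one has $k-1<k\le m-k-1<m-k$ with $\{k-1,k\}$ disjoint from $\{m-k-1,m-k\}$, so degrees $k-1$ and $k$ are affected by the sequence of $(W,\partial M^a)$ only. Thus $\dim H_{k-1}(\partial M^b)=\dim H_{k-1}(W)$ and $\dim H_k(\partial M^b)=\dim H_k(W)$, giving $(j_{k-1},j_k)=(-r_A,\,1-r_A)$, which equals $(0,1)$ or $(-1,0)$. This proves $j_{k-1}j_k=0$, together with 2.a) (where $j_k=1$, as required when $2k\ne m-1$) and 2.b).

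The remaining, and main, difficulty is the exceptional case $2k=m-1$, where $m-k-1=k$ and the middle degree $k$ is acted on by both sequences at once. Here the diagram chases give $j_{k-1}=-r_A$, $j_k=1+r_B-r_A$ and $j_{k+1}=r_B-1$, so $j_{k-1}j_k=0$ can fail only for $r_A=r_B=1$, which must be excluded. The hard part will be this exclusion, and I would carry it out using Poincar\'e duality on the closed $(m-1)$-dimensional level sets: $\dim H_\ell(\partial M^a)=\dim H_{m-1-\ell}(\partial M^a)$ and likewise for $\partial M^b$, whence $j_\ell=j_{m-1-\ell}$ and in particular $j_{k-1}=j_{k+1}$. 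This identity reads $-r_A=r_B-1$, i.e.\ $r_A+r_B=1$, ruling out $(r_A,r_B)=(1,1)$. Consequently $(j_{k-1},j_k)$ is $(0,2)$ or $(-1,0)$, so $j_{k-1}j_k=0$, and the value $j_k=2$ occurs precisely in this case, completing 2.a) and 2.b). The one delicate point is that this duality holds unconditionally only over $\mathbb F=\mathbb F_2$; over an arbitrary field it requires $\mathbb F$-orientability of the level sets (equivalently of $W$), which may fail when $M$ is non-orientable, and I would treat the general case by applying Poincar\'e--Lefschetz duality with the orientation local system, checking that the dimension count forcing $r_A+r_B=1$ is unaffected.
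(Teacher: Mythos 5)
Your cobordism set-up is correct and genuinely different from the paper's route: the paper never computes $H_\bullet(W,\partial M^a)$, but instead rewrites $\partial M^b \cong U \cup_\varphi (D^k\times S^{m-k-1})$ with $U=\partial M^a\setminus (S^{k-1}\times D^{m-k})$ and plays the two long exact sequences of $(\partial M^a,U)$ and $(\partial M^b,U)$ against the common group $H_\ell(U)$, running a dichotomy on the attaching-sphere class ${\partial}_k^b[D^k]\in H_{k-1}(U)$. Your version through $W$ is arguably cleaner: it gives 1) immediately, and the book-keeping $(j_{k-1},j_k,j_{k+1})=(-r_A,\,1+r_B-r_A,\,r_B-1)$ in the case $2k=m-1$ correctly isolates where all the content lies. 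Note also that your chase alone already yields 2.a) and 2.b) in full: $j_{k-1}=0$ forces $r_A=0$, hence $j_k=1+r_B\in\{1,2\}$ (and $j_k=1$ when $2k\ne m-1$), while $j_k=0$ forces $(r_A,r_B)=(1,0)$, hence $j_{k-1}=-1$. The only claim that needs more is the headline identity $j_{k-1}j_k=0$, i.e.\ the exclusion of $(r_A,r_B)=(1,1)$.

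That exclusion is where your proof has a genuine gap. You invoke Poincar\'e duality on the ``closed'' level sets to get $j_\ell=j_{m-1-\ell}$, but under Assumptions~\ref{Assumptions} the levels are neither compact (the paper stresses that its level sets need not be compact) nor orientable, and the identity is simply false in this generality. Concretely, take $m=3$, $k=1$ (so $2k=m-1$) and attach a $1$-handle with orientation-reversing framing to a level $S^2$; the level above is a Klein bottle, and over $\mathbb F=\mathbb Q$ one computes $(j_0,j_1,j_2)=(0,1,-1)$, so $j_{k-1}\ne j_{k+1}$ and $r_A+r_B=0$, contradicting your forced value $r_A+r_B=1$; this also refutes your concluding claim that $(j_{k-1},j_k)$ must be $(0,2)$ or $(-1,0)$ when $2k=m-1$ --- only the proposition's weaker $j_k\in\{1,2\}$ survives. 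Your proposed repair via the orientation local system does not work: twisted Poincar\'e--Lefschetz duality relates $H_\ell$ to cohomology with coefficients in the orientation system, and gives no relation between the \emph{untwisted} Betti numbers entering $j_\ell$. Non-compactness is even more damaging: joining $\partial M^a=\mathbb R^2\sqcup\mathbb R^2$ by a $1$-handle gives $\partial M^b\cong S^1\times\mathbb R$, and this cobordism extends, by gluing product collars, to an open $3$-manifold with a complete metric satisfying Assumptions~\ref{Assumptions} (gradient of norm one outside a compact set); there $(r_A,r_B)=(1,1)$, i.e.\ $(j_0,j_1)=(-1,1)$, so the configuration you must exclude is actually realizable once the levels are non-compact. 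Hence no duality argument can close the case $2k=m-1$ in the stated generality: a correct completion must either add compactness/orientability hypotheses on the levels, or argue inside the level sets via the class ${\partial}_k^b[D^k]\in H_{k-1}(U)$ as the paper does --- and you should be aware that this mixed case is precisely the point at which the paper's own proof is also at its most delicate.
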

\begin{example}\quad\\[-5mm]
\begin{itemize}
\item  [$\ 2.a$)]
$j_1 = 2$,  $j_0=0$, $2k = m-1$: 
Attaching a handle $S^1\times D^1$ to $S^2\subseteq \mathbb{T}^3$ yields 
$\mathbb{T}^2\subseteq \mathbb{T}^3$.
\item [$\ 2.a$)]
$j_1 = 1$,  $j_0=0$, $2k \ne m-1$: The level sets of a four-manifold
may bifurcate from $S^3$ to $S^2\times S^1$.
\item [$\ 2.b$)]
$j_1 = 0$,  $j_{0}=-1$: 
Connecting $S^2\sqcup S^2$ by a handle $S^1\times D^1$ yields $S^2$.
\end{itemize}
\end{example}
\begin{proof}
The boundary of the handle $H^m_k$ has the form
\[\partial H^m_k = (\partial D^k\times D^{m-k})\cup (D^k\times \partial D^{m-k}) =
(S^{k-1}\times D^{m-k}) \cup (D^k\times S^{m-k-1}). \]
By the above, $\partial M^b$ is diffeomorphic to $\partial (M^a \cup H^m_k)$, that is,
\[\partial M^b \cong \big(\partial M^a\setminus (S^{k-1}\times D^{m-k}) \big) \cup_\varphi   (D^k\times S^{m-k-1}).\]
With 
\begin{equation} \label{eq:U:V}
U:=\partial M^a\setminus (S^{k-1}\times D^{m-k})\mbox{ and }V := D^k\times S^{m-k-1}
\end{equation}
this is abbreviated as $\partial M^b \cong U\cup_\varphi V$.
Observe that
\begin{align*}
H_\bullet(\partial M^a,U)& \cong H_\bullet(S^{k-1}\times D^{m-k}, S^{k-1}\times S^{m-k-1}) \cong  
{\mathbb F}^{\delta_{m-k}(\bullet)}\oplus {\mathbb F}^{\delta_{m-1}(\bullet)}
\end{align*}

This can be proven using cellular homology and a CW decomposition of $S^{k-1}\times D^{m-k}$ into the union of two cells of dimensions 
$m-k$ and $m -1$. Alternatively, one can use the relative K\"unneth formula in homology; see Dold \cite[Chap.\ VI.10]{Do}.  
Similarly, one has
\begin{align*}
H_\bullet(\partial M^b,U) &
\cong H_\bullet(D^k\times S^{m-k-1},S^{k-1}\times S^{m-k-1}) 
\cong {\mathbb F}^{\delta_{k}(\bullet)}\oplus {\mathbb F}^{\delta_{m-1}(\bullet)}
\end{align*}

By the long exact homology sequence of the pair $(\partial M^a,U)$
\[ \ldots \stackrel {{\partial}_{\ell+1}^a}{\to} H_\ell(U)\stackrel {i_\ell^a}{\to}  H_\ell(\partial M^a) 
\stackrel {j_\ell^{a}}{\to}  
H_\ell(\partial M^a,U)\stackrel {{\partial}_\ell^a}{\to} H_{\ell-1}(U)\stackrel {i_{\ell-1}^a}{\to}   \ldots,\]
and similarly for $(\partial M^b,U)$,
changes in homology between $M^a$ and $M^b$ can at most happen in dimensions 
$\ell \in \{k-1, k, m-k-1, m-k, m-2, m-1\}$:

\begin{tikzcd}[row sep=tiny]\hspace*{-5mm}
 {\mathbb F}^{\delta_{m-k-1}(\ell)}\oplus {\mathbb F}^{\delta_{m-2}(\ell)} \hspace*{-4mm}\arrow{dr}{{\partial}_{\ell+1}^a}  
 &&H_\ell(\partial M^a)\arrow{r} {j_\ell^a} &  {\mathbb F}^{\delta_{m-k}(\ell)}\oplus {\mathbb F}^{\delta_{m-1}(\ell)}\\
 \hspace*{-5mm}
&H_\ell(U) \arrow{ur}{i_\ell^a} \arrow{dr}{i_\ell^b} &\\
\hspace*{-5mm}
 {\mathbb F}^{\delta_{k-1}(\ell)}\oplus {\mathbb F}^{\delta_{m-2}(\ell)} \arrow{ur}{{\partial}_{\ell+1}^b}
 && H_\ell(\partial M^b)\arrow{r} {j_\ell^b} & {\mathbb F}^{\delta_{k}(\ell)}\oplus {\mathbb F}^{\delta_{m-1}(\ell)}
\end{tikzcd}

Set
$$a_\ell:=\dim(H_\ell(\partial M^a))\ ,\ b_\ell:=\dim(H_\ell(U))\ ,\  c_\ell:=\dim(H_\ell(\partial M^b)).$$

We shall also assume that $k > 0$, as the theorem is trivial for $k = 0$.

Under the above assumptions on $k$, we have that 
$$\delta_{m-1}(k-1) = \delta_{k}(k-1) = \delta_{m-k}(k-1) = \delta_{m-k-1}(k-1) 
= \delta_{m-2}(k-1) = 0.$$
It follows that  $i_{k-1}^a$ is an isomorphism and  that $i_{k-1}^b$ is surjective 
(but not necessarily injective).

Consider the homology cycle ${\partial}_{k}^b[D^k]$, which is an element of $H_{k-1}(U)$. 
There are two possibilities, depending on whether this cycle is zero in $H_{k-1}(U)$. 
If it is non-zero,  then the homomorphism $i_{k-1}^b$ has a non-trivial kernel and hence $a_{k-1} - c_{k-1} = b_{k-1} - c_{k-1} > 0.$ Since 
${\partial}_{k}^b[D^k]$ spans the kernel of $i_{k-1}^b$, we have that $c_{k-1} = a_{k-1} -1.$

Now suppose ${\partial}_{k}^b[D^k] = 0.$ 
We shall show that in this case $c_k = a_k +1$ or $c_k = a_k + 2$. 
Indeed, we observe that ${\partial}_{k}^b[D^k] = 0$ implies the existence of $\alpha_k$ in 
$H_k(\partial M^b)$ such that $j_k^b(\alpha_k) = [D^k].$ 
In particular, $\alpha_k$ is not in the kernel of $j_k^b$ and, by exactness, not in the image of $i_{k}^b$.
First, assume that $k \ne m-2$. Then $i_{k}^b$ is injective and hence $b_k  =  c_k -1 $. 
On the other hand, $i_{k}^a$ is surjective and hence
$a_k = b_k$ or $a_k = b_k - 1$. 
We infer that $c_k - a_k = 1$ or $2$. Note that if $2k \ne m-1$, then $c_k = a_k + 1$. 
Now assume that $k = m-2$. Then $m \le 3$ by the assumption $2k < m$. 
\end{proof}

In the following proposition we use $G = \mathbb Z$ as a group of coefficients for homology.
With notation \eqref{eq:U:V} we obtain:

\begin{proposition} \label{proposition/middle_dimension} 
Consider the case of the middle dimension $2k = m$ and a single critical point.
The levels $\partial M^b$ and $\partial M^a$ have different homology groups
if the orders of the group elements ${\partial}_{k}^a[D^{m-k}]$ and ${\partial}_{k}^b[D^{k}]$ 
are different in $H_{k-1}(U,\mathbb Z)$. 
This is the case, in particular, if exactly one of the classes ${\partial}_{k}^a[D^{m-k}]$ and ${\partial}_{k}^b [D^{k}]$ vanishes.
\end{proposition}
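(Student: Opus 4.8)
The plan is to run the long exact homology sequences of the pairs $(\partial M^a,U)$ and $(\partial M^b,U)$ over $\mathbb{Z}$ and to read off $H_{k-1}$ of each level set as an explicit quotient of $H_{k-1}(U,\mathbb{Z})$. First I would record the relative groups. Exactly as in the proof of Proposition~\ref{proposition/hom_change}, excision and the relative K\"unneth formula give, now with integer coefficients,
\[
H_\bullet(\partial M^a,U)\cong\mathbb{Z}^{\delta_{m-k}(\bullet)}\oplus\mathbb{Z}^{\delta_{m-1}(\bullet)},\qquad
H_\bullet(\partial M^b,U)\cong\mathbb{Z}^{\delta_{k}(\bullet)}\oplus\mathbb{Z}^{\delta_{m-1}(\bullet)}.
\]
In the middle dimension $m-k=k$ these two graded groups coincide, which is precisely why the $\mathbb{F}$-dimension count of Proposition~\ref{proposition/hom_change} cannot separate the two levels; any difference must be hidden in the attaching maps. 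The generator of $H_{k}(\partial M^a,U)$ is the relative fibre class $[D^{m-k}]$, whose boundary sphere $\{\mathrm{pt}\}\times S^{m-k-1}$ represents $\alpha:=\partial_k^a[D^{m-k}]$; likewise the generator $[D^k]$ of $H_k(\partial M^b,U)$ has boundary $S^{k-1}\times\{\mathrm{pt}\}$ representing $\beta:=\partial_k^b[D^k]$. Both $\alpha$ and $\beta$ lie in the \emph{same} group $A:=H_{k-1}(U,\mathbb{Z})$, which is finitely generated because $U$ deformation retracts onto a compact manifold with boundary (in the non-compact Palais--Smale case, finite generation follows from Assumptions~\ref{Assumptions} via Mayer--Vietoris).

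Next I would isolate degree $k-1$. Assuming $k\ge 2$ (the case $k=1$, i.e.\ $m=2$, is one-dimensional and is checked by hand), the relative groups vanish in degree $k-1$, since $k-1\notin\{m-k,m-1\}=\{k,2k-1\}$. Hence in both sequences the map onto the level set is surjective with kernel equal to the image of the connecting homomorphism, and that image is the cyclic group generated by $\alpha$, respectively $\beta$. Therefore
\[
H_{k-1}(\partial M^a,\mathbb{Z})\cong A/\langle\alpha\rangle,\qquad
H_{k-1}(\partial M^b,\mathbb{Z})\cong A/\langle\beta\rangle,
\]
with one and the same ambient group $A$. The proposition is thus reduced to a purely algebraic claim: if two elements of a finitely generated abelian group have different orders, then the corresponding quotients are non-isomorphic.

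To settle this claim I would split into two cases according to the orders. If exactly one of $\alpha,\beta$ has infinite order, then quotienting by it lowers the free rank by one, while quotienting by the other (finite-order) element preserves the rank; the two quotients then differ in rank. If both orders are finite but unequal, then each generator lies in the torsion subgroup $T\subseteq A$, so $A/\langle\alpha\rangle\cong\mathbb{Z}^{\,\mathrm{rk}(A)}\oplus(T/\langle\alpha\rangle)$, whose torsion part has order $|T|/\mathrm{ord}(\alpha)$; since $\mathrm{ord}(\alpha)\neq\mathrm{ord}(\beta)$ the torsion subgroups have different orders, so again the quotients are non-isomorphic. In either case $H_{k-1}(\partial M^a,\mathbb{Z})\not\cong H_{k-1}(\partial M^b,\mathbb{Z})$, which proves the statement. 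The final assertion is the special case in which one order equals $1$: if exactly one of $\alpha,\beta$ vanishes, its order is $1$ and the other order is strictly larger, so the orders differ and the criterion applies. I expect the algebraic comparison of the two quotients to be the only point needing genuine care—the exact-sequence bookkeeping is identical to that of Proposition~\ref{proposition/hom_change}—together with the minor nuisance of the low-dimensional ($k=1$) boundary case.
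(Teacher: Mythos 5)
Your proposal is correct and takes essentially the approach the paper intends: the paper states this proposition without a separate proof, immediately after setting $G=\mathbb{Z}$ in the long-exact-sequence framework of Proposition~\ref{proposition/hom_change}, and your identification $H_{k-1}(\partial M^a,\mathbb{Z})\cong H_{k-1}(U,\mathbb{Z})/\langle\partial_k^a[D^{m-k}]\rangle$ and $H_{k-1}(\partial M^b,\mathbb{Z})\cong H_{k-1}(U,\mathbb{Z})/\langle\partial_k^b[D^k]\rangle$, followed by comparing rank (one order infinite) or torsion cardinality $|T|/\mathrm{ord}(\cdot)$ (both orders finite), is exactly the intended argument made explicit. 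Your added care is also well placed: finite generation of $H_{k-1}(U,\mathbb{Z})$ (via Assumptions~\ref{Assumptions} and Mayer--Vietoris) is genuinely needed for the quotient comparison, and the degenerate case $k=1$, where the relative group in degree $k$ is $\mathbb{Z}^2$ rather than $\mathbb{Z}$, is correctly excluded and checked directly.
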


Observe that the function $g = -f$ has the same level sets as the function $f$; 
moreover, each index-$k$ critical point of $f$ is also a critical point of $g$ of index $m - k$. 
From Proposition~\ref{proposition/hom_change}, we get the following result. 
\begin{theorem} \label{theorem:level}
Let $f$ be a Morse function on an $m$-manifold $M$ with sublevel sets $M^b = f^{-1}(-\infty, b]$ and $M^b_a = f^{-1}[a, b]$. Assume that 
the set $M^b_a$ contains $N \ge 1$ critical points of the same index $k$ and no critical points of some other index. If the homotopy type of $\partial M^b$ and of $\partial M^a$ coincide, then 
$k=m/2$ (so in particular $m$ is even). 
\end{theorem}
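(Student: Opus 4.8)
The plan is to establish the contrapositive: granting Assumptions~\ref{Assumptions} and supposing $k \ne m/2$, I will exhibit a field $\mathbb F$ and a degree $\ell$ with $\dim H_\ell(\partial M^b,\mathbb F) \ne \dim H_\ell(\partial M^a,\mathbb F)$; since homotopy equivalent spaces have isomorphic homology over every field, this already rules out a common homotopy type. First I would reduce to $k < m/2$. As observed just before the statement, $-f$ has the same level sets as $f$ and carries each index-$k$ critical point to one of index $m-k$, so after possibly replacing $f$ by $-f$ and relabelling I may assume $k < m/2$. If $k = 0$ the claim is immediate, since crossing a local minimum adjoins a disjoint $(m-1)$-sphere to the level set and thus strictly raises $\dim H_0$; hence I may assume $0 < k < m/2$.

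Next I would separate the critical values. Using a standard Morse perturbation of $f$ supported in the interior of $M^b_a$ and leaving $f$ untouched near the regular levels $a$ and $b$ --- so that $\partial M^a$ and $\partial M^b$ are unchanged --- I arrange that the $N$ critical points sit on distinct critical levels. This produces regular values $a = a_0 < a_1 < \dots < a_N = b$ for which each slab $M^{a_i}_{a_{i-1}}$ contains a single critical point, still of index $k$; the perturbation preserves the number and indices of these isolated, non-degenerate points.

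Now I would run Proposition~\ref{proposition/hom_change} across each elementary crossing. Writing $d_\ell^{(i)} := \dim H_\ell(\partial M^{a_i},\mathbb F)$, the proposition gives $d_\ell^{(i)} - d_\ell^{(i-1)} = j_\ell^{(i)}$ with the pair $\bigl(j_{k-1}^{(i)},\,j_k^{(i)}\bigr)$ equal to one of $(0,1)$, $(0,2)$, or $(-1,0)$. The decisive feature is its sign-definiteness: at every crossing $j_k^{(i)} \ge 0$ and $j_{k-1}^{(i)} \le 0$, and since $j_{k-1}^{(i)} j_k^{(i)} = 0$ exactly one of the two is nonzero. Telescoping over $i = 1,\dots,N$, the quantity $\dim H_k$ is therefore non-decreasing while $\dim H_{k-1}$ is non-increasing along the sequence, with no possibility of cancellation: if some crossing has $j_k^{(i)} > 0$ then $\dim H_k(\partial M^b) > \dim H_k(\partial M^a)$, while otherwise every crossing has $j_{k-1}^{(i)} = -1$, so $\dim H_{k-1}(\partial M^b) = \dim H_{k-1}(\partial M^a) - N < \dim H_{k-1}(\partial M^a)$. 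Either way the $\mathbb F$-homology of $\partial M^b$ and $\partial M^a$ differs (these dimensions are finite by Assumptions~\ref{Assumptions}), completing the contrapositive.

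I expect the only real obstacle to lie in the second step rather than in the homological accounting, which is forced once the one-point case is in hand: I would need to confirm that within Assumptions~\ref{Assumptions} (in particular under Palais-Smale, with possibly non-compact level sets) the value-separating perturbation can be performed so as to fix $f$ near the regular levels $a$ and $b$ and to preserve every index, so that Proposition~\ref{proposition/hom_change} applies verbatim on each intermediate slab. Granting this, the monotone accumulation in degrees $k-1$ and $k$ is routine, and the key point is simply that the two admissible directions of change never interfere.
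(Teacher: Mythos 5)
Your proposal is correct and follows essentially the same route as the paper: Theorem~\ref{theorem:level} is obtained there directly from Proposition~\ref{proposition/hom_change} together with the $f \mapsto -f$ reduction to $k < m/2$, using exactly the monotonicity you exploit (made explicit in the proof of Theorem~\ref{theorem:level2}) that at each crossing $b_{k-1}$ can only decrease and $b_k$ can only increase, so the contributions of the $N$ index-$k$ points cannot cancel. Your only addition is the explicit compactly supported perturbation separating the critical values --- a standard step the paper leaves implicit --- which indeed fixes $\partial M^a$ and $\partial M^b$, preserves indices, and keeps Assumptions~\ref{Assumptions} intact.
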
 

Slightly more generally, we have the following theorem.

\begin{theorem} \label{theorem:level2}
Let $f$ be a Morse function on an $m$-manifold $M$. Assume that 
the set $M^b_a$ contains a critical point of index $k \ne m/2$ 
such that there exists no other critical point in $M^b_a$
of index $k-1, k+1$ or $m-k$. 
Then the homotopy types of $\partial M^b$ and of $\partial M^a$ do not coincide. 

\end{theorem}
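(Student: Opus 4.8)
The plan is to reduce to the single-critical-point situation of Proposition~\ref{proposition/hom_change} and then isolate one homological quantity that can only move in one direction. First I would normalize the index: since $g=-f$ has the same level sets as $f$ and turns an index-$j$ critical point into an index-$(m-j)$ one, and since the hypothesis is invariant under $f\mapsto -f$ (which sends the distinguished index $k$ to $m-k$ and the excluded set $\{k-1,k+1,m-k\}$ to the set $\{m-k+1,m-k-1,k\}$ appropriate to index $m-k$), I may assume without loss of generality that $k<m/2$. Next I would perturb $f$ in the interior of $M_a^b$ by a perturbation compactly supported away from the regular levels $a$ and $b$, so that the critical points lying in $M_a^b$ sit on pairwise distinct critical levels; this leaves $M^a$ and $M^b$, hence $\partial M^a$ and $\partial M^b$, unchanged. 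Inserting regular values between consecutive critical levels then expresses the passage from $\partial M^a$ to $\partial M^b$ as a finite sequence of single-critical-point passages, so that the net change of each Betti number $\dim H_\ell(\,\cdot\,,\mathbb F)$ from $a$ to $b$ is the \emph{sum} of the changes produced by the individual critical points.

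The invariant I would track is $D(\,\cdot\,):=\dim H_k(\,\cdot\,,\mathbb F)-\dim H_{k-1}(\,\cdot\,,\mathbb F)$, and the goal is $D(\partial M^b)-D(\partial M^a)\ge 1$. By Proposition~\ref{proposition/hom_change} the distinguished index-$k$ point changes $D$ by $j_k-j_{k-1}$, which equals $1$ in case $2.b$ and lies in $\{1,2\}$ in case $2.a$; in either case its contribution is $\ge 1$. It then suffices to show that every \emph{other} critical point contributes a nonnegative amount to $D(\partial M^b)-D(\partial M^a)$.

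For that last claim I would first record, from the two long exact sequences in the proof of Proposition~\ref{proposition/hom_change} (applied to the given point when its index is below $m/2$, and to the same point viewed through $-f$ when it is above $m/2$), that passing a single index-$j$ critical point alters $H_\ell(\,\cdot\,,\mathbb F)$ only for $\ell\in\{j-1,j,m-j-1,m-j,m-2,m-1\}$. Hence a point can affect $H_k$ or $H_{k-1}$ only if its index lies in $\{k-1,k,k+1,m-k-1,m-k,m-k+1\}$ (for $m\ge 4$; small $m$ is treated separately). The hypothesis removes the indices $k-1,k+1,m-k$, leaving only $k$, $m-k-1$, $m-k+1$. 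A short sign computation then settles each: a further index-$k$ point again contributes $j_k-j_{k-1}\ge 1$; an index-$(m-k-1)$ point (an index-$(k+1)$ point for $-f$) affects only dimensions $k,k+1$ and can only increase $\dim H_k$ while leaving $\dim H_{k-1}$ fixed; and an index-$(m-k+1)$ point (an index-$(k-1)$ point for $-f$) affects only dimensions $k-2,k-1$ and can only decrease $\dim H_{k-1}$ while leaving $\dim H_k$ fixed. In every case $D$ does not decrease, so summing gives $D(\partial M^b)-D(\partial M^a)\ge 1>0$; therefore $H_k$ or $H_{k-1}$ genuinely differs between the two levels and the spaces cannot be homotopy equivalent. (It is worth noting that the three \emph{excluded} indices are precisely those whose contribution to $D$ carries the opposite, unfavorable sign, which is exactly what forces the stated hypothesis.)

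The main obstacle I anticipate is the refined per-dimension sign analysis for the ``dual'' indices $m-k\pm1$: Proposition~\ref{proposition/hom_change} is stated only for an index below $m/2$ and only tracks the pair $(j_{k-1},j_k)$, so I would need to re-run its exact-sequence bookkeeping under the substitution $f\mapsto -f$ to pin down the direction of the change in each of the dimensions $k-1$ and $k$. The residual difficulty is purely combinatorial: controlling the degenerate overlaps where the index bands collide, namely $2k=m-1$ (where $m-k-1$ coincides with $k$), the low-dimensional cases $m\le 3$, and the cases in which $k$, $k-1$, or $m-k\pm1$ meet the ``universal'' dimensions $m-2,m-1$, together with the trivial boundary case $k=0$. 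These are routine but must be checked individually.
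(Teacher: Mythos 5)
Your proposal is correct and follows essentially the same route as the paper's proof: reduce to $k<m/2$ via $f\mapsto -f$, invoke Proposition~\ref{proposition/hom_change} to see that after the excluded indices $k-1,k+1,m-k$ are removed only critical points of index $k$, $m-k-1$, $m-k+1$ can affect $b_{k-1}$ or $b_k$, and verify by a sign analysis that every such contribution moves $b_k$ up and $b_{k-1}$ down, so the change produced by the distinguished index-$k$ point cannot be compensated. Your explicit perturbation separating critical levels and the invariant $D=b_k-b_{k-1}$ are just a slightly more careful bookkeeping of the paper's monotonicity argument, and the degenerate overlaps you flag ($2k=m-1$, the case $k'=m/2$, collisions with $m-2,m-1$, and $m\le 3$) are precisely the checks the paper carries out, all of which go through as you anticipate.
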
 
\begin{proof}
First observe that the statement holds if $m \le 3$. 
We shall assume that $m > 3$ and that $k < m/2$; the case $k > m/2$ is similar. 

The index-$k$ critical point contributes to the change of the Betti numbers $b_{k-1}$ or $b_{k}$, 
when the value exceeds $f(x_k)$. 
Specifically, when passing this critical point $b_{k-1}$ decreases by $1$ or $b_k$ 
increases by $1$ or $2$; see Proposition~\ref{proposition/hom_change}. 
Any critical point of index $k'$ may 
contribute to the homology change only in the following dimensions: 
$\{k'-1, k', m-k'-1, m-k', m-2, m-1\}$. Assuming $k < m/2$
$m > 3,$ and $k' \ne k+1, k-1, m-k$, we shall now show what possible changes 
in the Betti numbers $b_{k-1}$ and $b_k$ can occur when passing such a critical point.
 
Observe that  $m-1, m-2 \notin \{ k, k-1 \}$. 
Moreover, the index $k'$ critical point gives rise to changes in $b_{k-1}$ and $b_k$ only if
$k' \in \{k, m-k+1,m-k-1\}.$ 
 
\begin{enumerate}[$\bullet$]
\item 
If $k' = k$, then $b_{k-1}$ decreases by $1$ or $b_k$ increases by $1$ or $2$. 

\item 
If $k' = m-k + 1$, then $m-k' = k-1$. 
Hence when passing the critical point of index $k' = m-k+1$, the Betti number $b_{k-1}$ 
can only decrease and  the Betti number $b_k$ does not change.
 
\item 
Finally, consider the case $k' = m-k-1$. If $k' < m/2,$ then $k' = k$. 
This case was considered earlier. 
If $k'>m/2$, then $m-k' = k+1 < m/2$. 
Moreover, the Betti number $b_k$ can only increase and $b_{k-1}$ does not change. 
The remaining case $k' = m/2$ is not possible since then $k+1 = m/2 = k'$.
\end{enumerate}
  
We observe that in all these cases the number $b_{k-1}$ can only decrease and $b_k$ can only increase. Thus, the initial change
that occurs when passing the given index $k$ critical point cannot be compensated. 
The result follows. \end{proof}

\section{Energy levels in classical mechanics} \label{sec:natural}

\subsection{Mechanical systems on vector bundles} \quad\\
Consider a rank $n$ vector bundle 
\begin{equation} \label{def:pi:E:M}
\pi \colon E \to N
\end{equation}
over a connected $n$-manifold $N$ without boundary. The manifold $N$ and the bundle
$\pi \colon E \to N$ are assumed to be orientable. 

We will be interested in the topology change of level sets of
a `Hamiltonian' function on $E$ of the following form \eqref{Hamilton} (which includes the class of natural mechanical systems
and natural mechanical systems with magnetic terms)
\begin{equation}
H = K + V \circ \pi,
\end{equation}
where $K$ is a Riemannian bundle metric on $E$ and $V$ is a Morse function on $N$.  We shall assume that both
$H$ and $V$ satisfy Assumptions~\ref{Assumptions}. We will need 
the following result, which specifies how the homology groups of the level sets $H^{-1}(h)$ change when passing an index-$n$ critical point.

\begin{proposition} \label{proposition/maxima}
Let $x_c$ be a non-degenerate local maximum of $\,V$ such that there are no other critical points on $V^{-1}(h_c), \ h_c = V(x_c) = H(x_c,0).$ Let $\mathbb F$ be a coefficient field and  $\varepsilon > 0$ be sufficiently small. Then
the $(n-1)$-Betti number changes according to
$$
b_{n-1}\big(H^{-1}(h_c+\varepsilon),\mathbb F\big) 
= b_{n-1}\big(H^{-1}(h_c-\varepsilon),\mathbb F\big) + j_{n-1},
$$
where $j_{n-1} = -1$ if $x_c$ is not a global maximum and $j_{n-1} \in \{-1,0,1\}$ if $x_c$ is a global maximum.
\end{proposition}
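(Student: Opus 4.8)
The plan is to relate the level set $H^{-1}(h)$ to the \emph{Hill region} $N^{h} := V^{-1}(-\infty,h] \subseteq N$ by Lefschetz duality, and then to track how $N^{h}$ changes across the maximum $x_c$. First I would observe that the sublevel set $M^{h} := H^{-1}(-\infty,h]$ fibrewise deformation retracts onto the zero section by the contraction $(q,p)\mapsto (q,tp)$, so $M^{h}\simeq N^{h}$. For $h = h_c\pm\varepsilon$ the value $h$ is regular for $H$ (the critical values of $H$ are exactly those of $V$, attained on the zero section), so $M^{h}$ is a compact (under Assumptions~\ref{Assumptions}) oriented $2n$-manifold with boundary $\partial M^{h} = H^{-1}(h)$, the orientability coming from that of $N$ and of $E$. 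Note that the critical point $(x_c,0)$ has index $n = m/2$, i.e.\ the middle dimension, which is precisely why the earlier count of Proposition~\ref{proposition/hom_change} does not apply directly.

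Next I would feed the Lefschetz duality isomorphism $H_{\ell}(M^{h},\partial M^{h})\cong H^{2n-\ell}(M^{h})\cong H^{2n-\ell}(N^{h})$ into the long exact sequence of the pair $(M^{h},\partial M^{h})$. Since $N^{h}$ has the homotopy type of an $n$-complex, $H^{n+1}(N^{h})=0$, and the portion around degree $n-1$ collapses to
\[
H_n(N^{h})\xrightarrow{\ \psi\ } H^{n}(N^{h}) \longrightarrow H_{n-1}\big(H^{-1}(h)\big)\longrightarrow H_{n-1}(N^{h})\longrightarrow 0,
\]
where $\psi$ is the composite of $H_n(M^h)\to H_n(M^h,\partial M^h)$ with the duality isomorphism. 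Hence $b_{n-1}(H^{-1}(h)) = b_{n-1}(N^{h}) + \dim\operatorname{coker}\psi$. In particular, whenever $N^{h}$ has non-empty boundary one has $H^{n}(N^{h})=0$, so the level-set Betti number equals that of the Hill region: $b_{n-1}(H^{-1}(h)) = b_{n-1}(N^{h})$.

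It then remains to compare the two Hill regions. Passing the index-$n$ maximum $x_c$ attaches an $n$-cell, $N^{h_c+\varepsilon}\simeq N^{h_c-\varepsilon}\cup_{S^{n-1}}D^{n}$, and since $N^{h_c-\varepsilon}$ has boundary, $H_n(N^{h_c-\varepsilon})=0$; the relevant piece of the long exact sequence of this pair reads $0\to H_n(N^{h_c+\varepsilon})\to \mathbb F \xrightarrow{\ \partial\ } H_{n-1}(N^{h_c-\varepsilon})\to H_{n-1}(N^{h_c+\varepsilon})\to 0$, with $\partial$ recording the class of the attaching sphere. If $x_c$ is not a global maximum, then $N^{h_c+\varepsilon}$ still has boundary, so $H_n(N^{h_c+\varepsilon})=0$, the attaching sphere is non-trivial, and $b_{n-1}(N^{h_c+\varepsilon})=b_{n-1}(N^{h_c-\varepsilon})-1$; combined with the previous paragraph (both Hill regions having boundary) this gives $j_{n-1}=-1$. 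If $x_c$ is a global maximum, then $N^{h_c+\varepsilon}=N$ is closed and orientable, so the attaching sphere is null-homologous and $b_{n-1}(N^{h_c-\varepsilon})=b_{n-1}(N)$; now the correction term $\dim\operatorname{coker}\psi$ appears only at $h_c+\varepsilon$, yielding $j_{n-1}=\dim\operatorname{coker}\psi\in\{0,1\}\subseteq\{-1,0,1\}$.

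The main obstacle is the identification of the map $\psi$ in the global case: I would argue that $M^{h_c+\varepsilon}$ is diffeomorphic to the disk bundle $D(E)$ (the fibre radius $\sqrt{h-V(q)}$ is everywhere positive), and that, under the Lefschetz (Thom) duality identifications, $\psi$ is the self-intersection form of the zero section, whose value on $[N]$ is the Euler number of $E$. Thus $\operatorname{rank}\psi=1$ exactly when this Euler number is non-zero in $\mathbb F$, which pins down $\dim\operatorname{coker}\psi\in\{0,1\}$ and ties the global-maximum case to the Euler number, as elsewhere in Section~\ref{sec:natural}. Care is also needed with the naturality of the duality identifications and with the fibre-collapse of $H^{-1}(h)$ over $\partial N^{h}$; but the retraction $M^{h}\simeq N^{h}$ lets the argument bypass an explicit description of $H^{-1}(h)$ as a (singular) sphere bundle.
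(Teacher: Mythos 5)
Your argument has a genuine gap: it rests on compactness that the hypotheses do not provide. You assert that $M^{h}=H^{-1}(-\infty,h]$ is ``compact (under Assumptions~\ref{Assumptions})'', but the Palais--Smale condition together with finitely generated homology of level sets does \emph{not} imply properness --- the paper stresses that Assumptions~\ref{Assumptions} are strictly weaker than exhaustiveness, and the proposition is later applied exactly in non-compact situations (the restricted three-body and planar $n$-body problems, where the configuration space is non-compact). Both pillars of your proof collapse without compactness: Lefschetz duality $H_{\ell}(M^h,\partial M^h)\cong H^{2n-\ell}(M^h)$ needs $M^h$ compact (otherwise one must pass to compactly supported cohomology or Borel--Moore homology, and your bookkeeping formula changes), and in the global-maximum case your identifications $N^{h_c+\varepsilon}=N$ \emph{closed}, $H_n(N)\cong\mathbb F$, $M^{h_c+\varepsilon}\cong D(E)$ all presuppose $N$ compact. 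A concrete counterexample to your intermediate formula: take $N=\mathbb R^n$ ($n\ge 2$), $E=T^{*}\mathbb R^n$, $V(q)=-\|q\|^2$, so $H(q,p)=\|p\|^2-\|q\|^2$ satisfies Assumptions~\ref{Assumptions} with a single non-degenerate global maximum at the origin. Then $\Sigma_{+}=H^{-1}(\varepsilon)\cong\mathbb R^n\times S^{n-1}$ has $b_{n-1}=1$, while $N^{h_c+\varepsilon}=\mathbb R^n$ has $b_{n-1}=0$ and $H_n(\mathbb R^n)=H^n(\mathbb R^n)=0$, so $\dim\operatorname{coker}\psi=0$; your claimed equality $b_{n-1}(H^{-1}(h))=b_{n-1}(N^{h})+\dim\operatorname{coker}\psi$ fails. (The proposition itself still holds here, with $j_{n-1}=0$, since $\Sigma_-$ is a hyperboloid $\cong S^{n-1}\times\mathbb R^n$.)

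This is precisely why the paper argues differently: it decomposes the \emph{level sets themselves}, writing $\Sigma_{\pm}=T_{\pm}\cup_{f_\pm}$ (handle-boundary pieces $S^{n-1}\times D^n$, resp.\ $D^n\times S^{n-1}$) with $T_-$ and $T_+$ homeomorphic, and compares the exact sequences $\mathbb F\to H_{n-1}(T_\pm)\to H_{n-1}(\Sigma_\pm)\to 0$ using two geometric facts --- the attaching fiber $f_-(\{x\}\times\partial D^n)$ is null-homotopic in $T_-$ (it can be slid to the collapsed boundary of the Hill region, available exactly when $x_c$ is not a global maximum), while $f_+(\partial D^n\times\{y\})$ is a section over $\partial U$ projecting to $[\partial U]\ne 0$. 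This needs only Assumption~2 (finite dimensionality for the count $b_{n-1}(\Sigma_\pm)=b_{n-1}(T_\pm)-\kappa_\pm$, $\kappa_\pm\in\{0,1\}$) and no duality or fundamental class. Your route is salvageable, and in fact sharper, if you add the hypothesis that $H$ is proper (or $N$ compact in the global-maximum case): then your identification of $\psi$ with the self-intersection form of the zero section correctly yields $j_{n-1}\in\{0,1\}$ governed by $e(E)\bmod\operatorname{char}\mathbb F$, recovering the refinement stated in the Remark after Theorem~\ref{theorem/closed_manifold_appendix}; but as a proof of Proposition~\ref{proposition/maxima} as stated, the compactness step is a genuine error, not a technicality.
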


\begin{proof}

Let $x_c$ be a local maximum of $V$ and $h_c$ be the corresponding critical value. By Assumptions~\ref{Assumptions},
for all $\varepsilon > 0$ small, $h_c$ is the only critical value of $V$ 
(and therefore also of $H$) in $[h_c-\varepsilon,h_c+\varepsilon]$. 
We observe that: 

\begin{enumerate}[1.]
\item 
By applying the Morse Lemma \cite[Thm.\!~1.12]{Ni} to $V$,
there is  a closed neighborhood $U\subseteq N$ of 
$x_c$ and a suitable chart $\psi:U\to \psi(U)=D^m\subseteq {\mathbb R}^m$ 
with $\psi(x_c)=0$, such that $V\circ \psi^{-1}(x)=h_c-\|x\|^2$.

\item
Following a proof of the Morse lemma (for the function $H$), we can find a closed neighborhood $\tilde{U}$ of $\tilde{x}_c:=(x_c,0)$ 
and a local trivialization $\varphi \colon \tilde{U} \to D^n\times  D^n$
of \eqref{def:pi:E:M} such that $\psi \circ\pi = \pi_1\circ \varphi$ for $\pi_1(x,y):=x$ and
\begin{equation}
\label{H:Morse}
H\circ \varphi^{-1} \, (x,y) = h_c + \|y\|^2 - \|x\|^2 \qquad \big((x,y)\in D^n\times  {D}^n\big).
\end{equation}
If $\varepsilon>0$ is small, then the intersections with $W = \pi^{-1}(U)$ 
of the levels 
$$
\Sigma_\pm := H^{-1}(h_c\pm \varepsilon)
$$
are contained in $\tilde{U}$; moreover, they are the images of diffeomorphisms 
$$ \hat{f}_-: S^{n-1} \times D^n\to \Sigma_-\cap W, \ \mbox{  respectively, }\
\hat{f}_+: D^n \times S^{n-1} \to \Sigma_+ \cap W;
$$
see Figure~\ref{fig:morseLemma}. 
\begin{figure}[htbp]
\begin{center}
\includegraphics[width=70mm, height = 61mm]{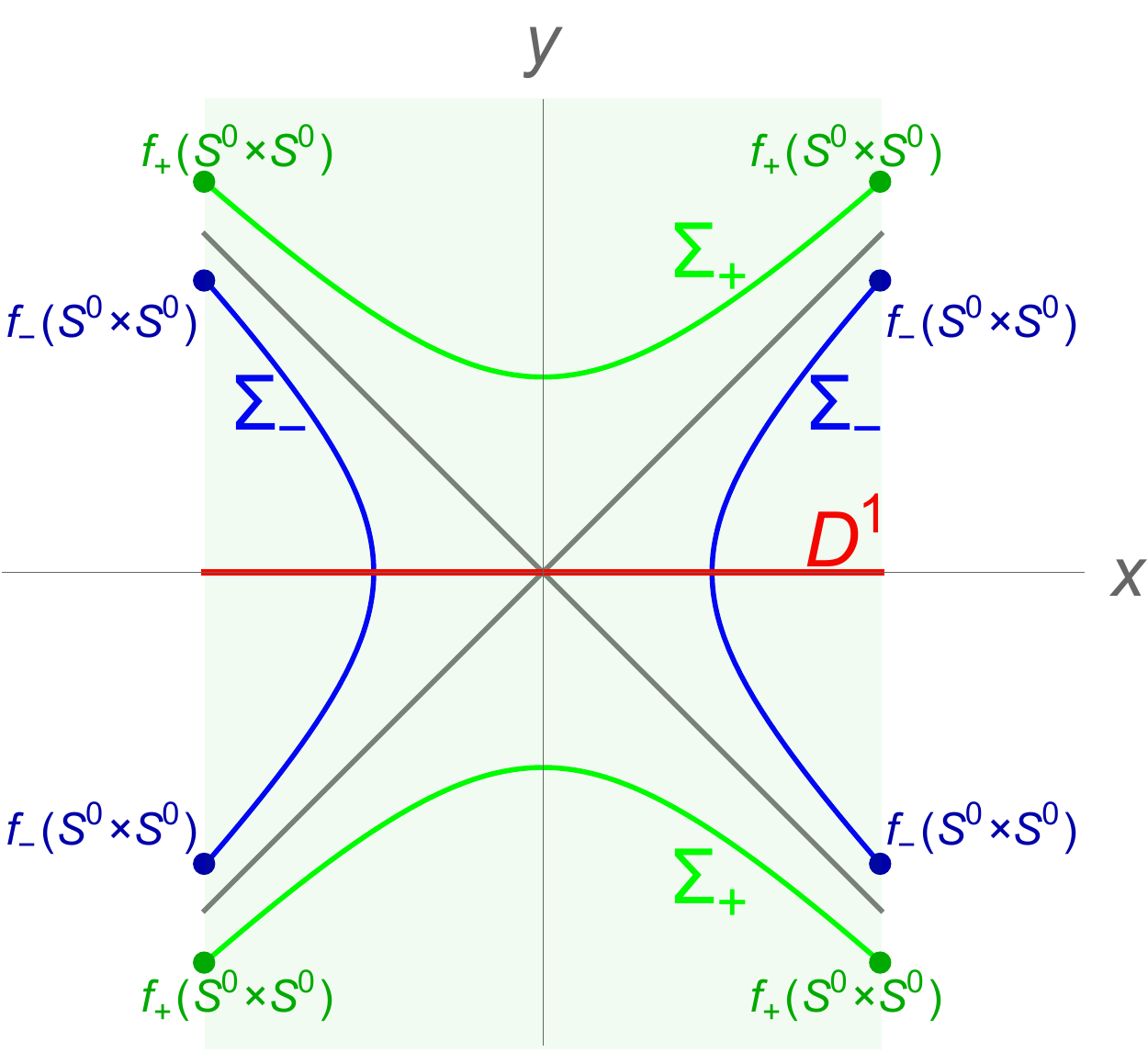}
\caption{The neighborhood $\tilde{U} \subset \pi^{-1}(U)$ of the critical point $\tilde{x}_c$
for $n=\dim(N)=1$.}
\label{fig:morseLemma}
\end{center}
\end{figure}
\item 
The  
$(2n-1)$--manifolds with boundary 
$$ T_\pm := \Sigma_\pm \setminus  \textup{int}(W)
 $$
are naturally homeomorphic.

\end{enumerate}

The third statement follows, for instance, from the following construction. Specifically, consider
a regular level set $H^{-1}(h)$. It projects to the corresponding {\em Hill region} 
$$B_h := \{x \in N \mid V(x) \le h\},$$ 
which is a manifold with boundary. The level set 
$H^{-1}(h)$ can be viewed as a sphere  bundle with fibers $S^{n-1}_{r(x)}$ over $x\in B_h$ 
of radius $r(x) = \sqrt{2(h-V(x))}$, thus collapsed over $\partial B_h$.

By 1) and 2), using the chart $(\tilde{U},\varphi)$, we can write
$$
\Sigma_- \cong  T_- \cup_{f_-} S^{n-1} \times D^n\ \mbox{ and }\ 
\Sigma_+ \cong  T_+ \cup_{f_+} D^n \times S^{n-1}
$$
where the attaching maps for the respective boundary components 
are the diffeomorphisms 
\[f_\pm:=\hat{f}_\pm|_{S^{n-1} \times S^{n-1} }:S^{n-1} \times S^{n-1} \to\partial T_\pm 
\quad , \quad \textstyle
(x,y)\mapsto  \big(x,\sqrt{1\pm \varepsilon}\, y\big) .\]
From this description it follows that under the map $f_{-}$, 
the boundary of the $n$-disk $\{x\} \times D^n$ is mapped to a fiber of the collapsed sphere bundle
$\pi \colon T_{-} \to \pi(T_{-})$. On the other hand, $f_{+}(\partial D^n \times \{y\})$ is a cross section. We note that 

$[\{x\}\times D^n]$ generates $H_n(S^{n-1}\times D^n,S^{n-1}\times S^{n-1}, \mathbb F)$,
if $x\in S^{n-1}$;

$[D^n\times \{y\}]$ generates
$H_n(D^n\times S^{n-1},S^{n-1}\times S^{n-1}, \mathbb F)$, if
$y\in S^{n-1}$.\\
Consider the homology exact sequences of the pairs
$(\Sigma_\pm,T_\pm)$:
\begin{equation} \label{eq/exactseq1}
\mathbb F \stackrel{\partial_{*}}{ \longrightarrow} H_{n-1}(T_\pm, {\mathbb F}) \longrightarrow
H_{n-1} \big(\Sigma_\pm, {\mathbb F}\big)  \longrightarrow 0.
\end{equation}
Below we shall use this sequence, together with the information about the attaching maps $f_{\pm}$ and some additional properties to
compare  the $(n-1)$-th homology groups of $\Sigma_+$ and $\Sigma_-$.

First, consider the case of a global maximum. 
In this case, the statement follows from the exactness of Eq.~\eqref{eq/exactseq1}.

Now consider the case when $x_c$ is a local, but not a global maximum. We observe that the following properties hold:
\begin{itemize}
\item[A)]
Let $B_\pm:= B_{h_c\pm\varepsilon}$. 
The intersection $\partial \pi(T_+)\cap \partial B_{+} \neq\emptyset$.
\item[B)] 
For any $x \in  \pi(T_-)$, there is a homotopy within $T_-$ between the fiber 
$S^{n-1}_{r(x)}  \hookrightarrow T_-$ over $x$ 
(with $r= \sqrt{2(h_c-\varepsilon-V(x))}$) and a point. 
\end{itemize}

To prove A), we note that $h_c<h_c+\epsilon < \sup_q V(q)$, so that 
$\partial B_+ \neq \emptyset,$ and that $\pi(T_+) = B_+ \setminus  U$. 

To prove B), we construct a homotopy $p: S^{n-1}\times [0,1]\to T_-$ which projects
to a path $\pi\circ p:[0,1]\to B_-$ from $x_c$ to a point of 
$\partial B_-\setminus \psi^{-1}\big( S^{n-1}_{\sqrt{\varepsilon}}\big)$.

\medskip

We note that properties A) and B) do not hold if $x_c$ is a global maximum. If $x_c$ is only a local maximum, then 
$\dim (H_{n-1}(\Sigma_{+})) = \dim (H_{n-1}(\Sigma_-)) -1,$ as we now show.
\begin{enumerate}[$\bullet$]
\item 
Consider the case of $\Sigma_+$. 
The first of the maps in Eq.~\eqref{eq/exactseq1} is given by a boundary homomorphism
$\partial_{*}$ on $H_n(D^n\times S^{n-1},S^{n-1}\times S^{n-1}, \mathbb F)
\cong \mathbb F$.
We claim that the image of $\partial_{*}$ is non-trivial in this case. 
Indeed, by A), 
$[D^n\times \{y\}] \in H_n(D^n\times S^{n-1},S^{n-1}\times S^{n-1}, \mathbb F)\setminus \{0\}$. 
We observed above that $f_{+}(\partial D^n \times \{y\})$ is a cross section over $\pi(f_{+}(\partial D^n \times \{y\}))$,
where $\pi$ is defined in \eqref{def:pi:E:M}. 
It follows that
$$\pi_{*}(\partial_*[{f}_+(D^n\times \{y\})]) = [\partial U].$$ 
In particular, we have that $\partial_*[{f}_+(D^n\times \{y\})] \in H_{n-1}(T_+)$ is non-zero, using B). 
From the exactness of \eqref{eq/exactseq1} it follows that the map 
$H_{n-1}(T_+) \to H_{n-1}(\Sigma_+)$ is not injective. 
However, it is surjective by virtue of the last arrow in \eqref{eq/exactseq1}.
\item 
Consider the remaining case of $\Sigma_-$. We observe that the map 
\[H_{n-1}(T_-) \to H_{n-1}(\Sigma_-)\] 
is bijective; indeed, from C) it follows that $\textup{image}(\partial_{*}) = 0$ in this case.
\end{enumerate}
We recall that,  by the assumption, 
the homology groups of $T_\pm$ and $\Sigma_\pm$ are finite dimensional ${\mathbb F}$-vector spaces. 
But by Observation 3) above, the spaces $T_-$ and $T_+$ are homeomorphic.
By counting dimensions, we conclude that 
$\dim(H_{n-1}(\Sigma_+)) = \dim(H_{n-1}(\Sigma_-)) - 1.$
\end{proof}

Combining Theorem~\ref{theorem:level} and Proposition~\ref{proposition/maxima}, we get

\begin{corollary} \label{corollary/not_global_maximum}
Consider a function on $E$ of the form
$$
H = K + V \circ \pi,
$$
where $K$ is a Riemannian bundle metric on $E$. 
Let $x_c$ be a non-degenerate critical point of $V$ that is not a global maximum. 
Assume that $x_c$ is the only critical point on $V^{-1}(h_c)$, for $h_c = V(x_c) = H(x_c,0)$.
Then the topology of $H^{-1}(h)$ changes when $h$ passes the critical value $h_{c}$. 
\end{corollary}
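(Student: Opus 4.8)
The plan is to reduce the statement to the two results just established, according to the Morse index of the critical point $(x_c,0)$ of $H$. First I would locate the critical points of $H$ and compute their indices. Since $K$ is fiberwise positive definite, its fiber derivative vanishes only on the zero section; hence $dH = 0$ forces $p = 0$, and there the base component of $dH$ is $dV$, so the critical points of $H$ are precisely the points $(x,0)$ with $x$ a critical point of $V$, and $H(x,0) = V(x)$. At such a point the Hessian of $H$ is block diagonal: the fiber block is the positive-definite form $K$, and, because the mixed and base derivatives of $p\mapsto K(p)$ vanish at $p=0$, the base block is ${\rm Hess}\,V(x)$. Consequently ${\rm index}(H,(x,0)) = {\rm index}(V,x)$. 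In particular, since $x_c$ is the unique critical point of $V$ on $V^{-1}(h_c)$, the point $(x_c,0)$ is the unique critical point of $H$ on $H^{-1}(h_c)$. Writing $m = \dim E = 2n$, the middle dimension is $m/2 = n$, and ${\rm index}(V,x_c) = n$ exactly when $x_c$ is a local maximum of $V$.

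I would then distinguish two cases. If $x_c$ is not a local maximum of $V$, then $k := {\rm index}(V,x_c) < n = m/2$, so $(x_c,0)$ is a single critical point of $H$ on its level with index $k \ne m/2$. Theorem~\ref{theorem:level}, applied with one critical point of index $k$, then shows at once that the homotopy types of $H^{-1}(h_c+\varepsilon)$ and $H^{-1}(h_c-\varepsilon)$ differ, so the topology changes.

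The remaining and genuinely delicate case is that of a local maximum $x_c$. Here $k = n = m/2$ lies in the middle dimension, where Theorem~\ref{theorem:level} is silent. This is exactly the situation of Proposition~\ref{proposition/maxima}: as $x_c$ is a non-degenerate local maximum and the only critical point on its $V$-level, that proposition gives $b_{n-1}\big(H^{-1}(h_c+\varepsilon)\big) = b_{n-1}\big(H^{-1}(h_c-\varepsilon)\big) + j_{n-1}$. The hypothesis that $x_c$ is \emph{not} a global maximum forces $j_{n-1} = -1 \ne 0$, so $H_{n-1}(\,\cdot\,,\mathbb F)$ changes dimension and the topology again changes.

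The main obstacle is precisely this middle-dimension maximum: it is the case in which the index equals $m/2$, where the general mechanism of Section~\ref{sec:level_sets} gives no information, so one is forced to exploit the vector-bundle structure via Proposition~\ref{proposition/maxima}. The only work beyond citing the two results is the index bookkeeping of the first paragraph, together with the observation that the ``not a global maximum'' hypothesis is exactly what pins down $j_{n-1} = -1$, rather than the ambiguous range $\{-1,0,1\}$ permitted for a global maximum.
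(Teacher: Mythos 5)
Your proof is correct and matches the paper's argument: the paper proves this corollary precisely by combining Theorem~\ref{theorem:level} (for index $k \ne m/2$, i.e.\ when $x_c$ is not a local maximum) with Proposition~\ref{proposition/maxima} (for the middle-dimension local-maximum case, where the non-global hypothesis forces $j_{n-1}=-1$). Your index bookkeeping, which the paper leaves implicit, is accurate, including the identification ${\rm index}(H,(x_c,0)) = {\rm index}(V,x_c)$ via the block-diagonal Hessian.
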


\begin{corollary} \label{corollary/many_global_maxima}
  Consider a function on $E$ of the form
$$
H = K + V \circ \pi,
$$
where $K$ is a Riemannian bundle metric on $E$. 
Let $x_1, \ldots, x_L \in V^{-1}(h_c)$ be the non-degenerate global maxima of the function $V$ on $N$.  If $L \ge 3$, then
  the topology of $H^{-1}(h)$  changes when $h$ passes the critical value $h_c$.
\end{corollary}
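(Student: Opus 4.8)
The plan is to reduce the assertion to $L$ successive applications of Proposition~\ref{proposition/maxima} by first perturbing $V$ so as to split the single critical value $h_c$ into $L$ distinct ones. Since the $x_i$ are \emph{global} maxima, they are precisely the critical points lying on the top level $V^{-1}(h_c)$, and every other critical point of $V$ has value bounded away from $h_c$, say below $h_c-\delta$. I would therefore fix disjoint closed disks $U_i\ni x_i$ and a smooth function $\rho$ that is constant, equal to a positive number $c_i$, on a neighborhood of $x_i$ and supported in $\bigcup_i U_i$, with the $c_i$ pairwise distinct and $c_L=\max_i c_i$. For $t>0$ small, $\tilde V:=V+t\rho$ is again Morse: each $x_i$ persists as a non-degenerate local maximum, now at the distinct value $h_c+tc_i$; no new critical points appear (on the annuli where $\rho$ is non-constant, $\|\textup{grad}\,V\|$ is bounded away from $0$); and $x_L$ becomes the \emph{unique} global maximum of $\tilde V$, while $x_1,\dots,x_{L-1}$ become local maxima that are \emph{not} global.

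Next I would choose $\varepsilon>0$ with $tc_L<\varepsilon<\delta$. Then the two endpoint level sets are unchanged by the perturbation, without any further deformation argument. On the upper side, $h_c+\varepsilon$ lies above all maxima of both $V$ and $\tilde V$, so $H^{-1}(h_c+\varepsilon)$ and $\tilde H^{-1}(h_c+\varepsilon)$, with $\tilde H:=K+\tilde V\circ\pi$, are both the full $K$-unit sphere bundle $S(E)$ of $E$ over $N$. On the lower side, $h_c-\varepsilon$ lies below all the maxima but above all remaining critical values, so for each of $V$ and $\tilde V$ the sublevel set $\{V\le h_c-\varepsilon\}$ equals $N$ minus $L$ small disks about the $x_i$; hence both $H^{-1}(h_c-\varepsilon)$ and $\tilde H^{-1}(h_c-\varepsilon)$ are the sphere bundle over $N\setminus\bigsqcup_i\textup{int}(D_i)$ with fibers collapsed over the boundary spheres, and are therefore diffeomorphic. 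Thus it suffices to detect a topology change for the perturbed system, whose maxima now sit on distinct levels.

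The homological bookkeeping is then immediate. Raising the energy from $h_c-\varepsilon$ to $h_c+\varepsilon$ we cross the $L$ critical values one at a time, each carrying a single critical point, so Proposition~\ref{proposition/maxima} applies at every crossing. Crossing each of the $L-1$ non-global maxima $x_1,\dots,x_{L-1}$ decreases $b_{n-1}$ by exactly $1$, while crossing the global maximum $x_L$ changes $b_{n-1}$ by some $j_{n-1}\in\{-1,0,1\}$. Summing over all crossings,
$$
b_{n-1}\big(\tilde H^{-1}(h_c+\varepsilon),\mathbb F\big)-b_{n-1}\big(\tilde H^{-1}(h_c-\varepsilon),\mathbb F\big)=-(L-1)+j_{n-1}\le -(L-2),
$$
which is at most $-1$ as soon as $L\ge 3$. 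Hence $b_{n-1}$ strictly decreases and the topology of $H^{-1}(h)$ changes across $h_c$; the bound $L\ge 3$ is sharp, since for $L\le 2$ the contribution $j_{n-1}$ of the global maximum can exactly cancel the decrease.

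The step I expect to be the main obstacle is the rigorous justification of the perturbation: checking that $\tilde H$ still satisfies Assumptions~\ref{Assumptions}, that each split level carries exactly one critical point so that Proposition~\ref{proposition/maxima} applies verbatim, and---crucially for the sharp value $j_{n-1}=-1$ on the lower maxima---that precisely one maximum survives as a global maximum while the other $L-1$ do not. One could instead argue directly, writing $H^{-1}(h_c\pm\varepsilon)$ as a common piece $T$ with $L$ local pieces $S^{n-1}\times D^n$ respectively $D^n\times S^{n-1}$ glued on, and comparing the ranks of the two connecting homomorphisms $\mathbb F^{L}\to H_{n-1}(T)$ in the long exact sequences of the pairs $(H^{-1}(h_c\pm\varepsilon),T)$; but identifying those ranks requires tracking fiber classes against cross-section classes in $H_{n-1}(T)$ and is less transparent than the perturbation argument above.
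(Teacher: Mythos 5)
Your proposal is correct and takes essentially the same route as the paper: the paper's own device (made explicit in the appendix proof of Theorem~\ref{theorem/closed_manifold_appendix} for the case $L=2$) is precisely to perturb $V$ so that the maxima sit on close but distinct levels, leaving exactly one global maximum, and then to apply Proposition~\ref{proposition/maxima} at each crossing, so that $b_{n-1}$ drops by $1$ at each of the $L-1$ now-non-global maxima and changes by at most $+1$ at the global one, for a net change of at most $-(L-2)<0$ when $L\ge 3$. Your additional bookkeeping --- the choice $tc_L<\varepsilon<\delta$, the identification of the endpoint levels as collapsed sphere bundles over diffeomorphic Hill regions, and the observation that the compactly supported perturbation preserves Assumptions~\ref{Assumptions} --- correctly fills in details the paper leaves implicit.
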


\begin{remark}
 We note that Proposition~\ref{proposition/maxima} and 
 Corollaries~\ref{corollary/not_global_maximum}, \ref{corollary/many_global_maxima}  hold 
 even when the bundle \eqref{def:pi:E:M} is not orientable (but the base $N$ still is). 
 \end{remark}

Below we shall study in more detail the case of a global maximum of $V$. 
We shall additionally assume that the base manifold $N$ is compact. 
Under this assumption, the Euler number $e(E)$ of $\pi \colon E \to N$ is defined.

The following result specifies when the topology of $H^{-1}(h)$ changes 
when passing a global maximum of $V$ or a number $N$ of global maxima, which are on the same energy level. 

\begin{theorem} \label{theorem/closed_manifold}
Let $N$ be a closed orientable $n$-manifold and $\pi \colon E \to N$ 
be an orientable rank $n$ vector bundle over $N$. 
Consider the function $H = K + V\circ \pi \colon E \to {\mathbb R}$ on $E$
and let $x_1, \ldots, x_L \in V^{-1}(h_c)$ be the non-degenerate global maxima of the function 
$V$ on $N$.  
Then the topology of $H$ changes when $H$ passes the critical value $h_c$ 
if one of the following conditions is satisfied
  
$1)$ $L = 1$ and the Euler number $e(E)$ is not equal to $\pm 1$;
 
$2)$ $L = 2$ and the Euler number $e(E)$ does not vanish;

$3)$ $L > 2.$
\end{theorem}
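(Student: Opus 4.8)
The plan is to compare the level set just above the critical value, which is the full sphere bundle of $E$, with the level set just below it, and to read off the change of topology from the Betti number $b_{n-1}$ computed over a coefficient field $\mathbb F$ adapted to the arithmetic of the Euler number $e(E)$. The conceptual difficulty is that the global maxima of $V$ are index-$n$ critical points of $H$ on the $2n$-dimensional total space $E$, hence they lie exactly in the middle dimension $k=m/2$; Theorem~\ref{theorem:level} therefore gives no information, and whether the topology changes genuinely depends on $e(E)$ and, crucially, on the choice of $\mathbb F$.

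First I would identify the two level sets. Since $x_1,\dots,x_L$ are the global maxima, $h_c=\max V$, so for small $\varepsilon>0$ the only critical value of $H$ in $[h_c-\varepsilon,h_c+\varepsilon]$ is $h_c$ (by Assumptions~\ref{Assumptions}), and $\{V>h_c-\varepsilon\}$ is a disjoint union of $L$ coordinate disks about the $x_i$. Hence $\Sigma_+=H^{-1}(h_c+\varepsilon)$ is the full sphere bundle $S(E)$ over $N$, while the sublevel set $W:=H^{-1}(-\infty,h_c-\varepsilon]$ is diffeomorphic to the disk bundle $D(E|_{B_-})$ over $B_-:=N\setminus\bigsqcup_{i=1}^L\textup{int}(D^n_i)$, a compact orientable $2n$-manifold with boundary $\Sigma_-=H^{-1}(h_c-\varepsilon)$.

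For $\Sigma_+=S(E)$ I would use the homology Gysin sequence of the oriented $(n-1)$-sphere bundle, whose relevant segment is $H_n(N)\xrightarrow{\cap e}H_0(N)\to H_{n-1}(S(E))\to H_{n-1}(N)\to 0$, where $\cap e$ is multiplication by $e(E)$; thus $b_{n-1}(\Sigma_+)=b_{n-1}(N)+\eta$ with $\eta=1$ if $e(E)\equiv 0$ in $\mathbb F$ and $\eta=0$ otherwise. For $\Sigma_-=\partial W$ I would apply Lefschetz duality $H_k(W,\partial W)\cong H^{2n-k}(B_-)$; since $W\simeq B_-$ is homotopy equivalent to an $(n-1)$-complex, these groups vanish for $k\le n$, and the exact sequence of $(W,\partial W)$ gives $H_{n-1}(\Sigma_-)\cong H_{n-1}(B_-)$. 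Finally, excision identifies $H_\bullet(N,B_-)$ with $\bigoplus_{i=1}^L H_\bullet(D^n,S^{n-1})$, and the exact sequence of the pair $(N,B_-)$, together with $H_n(B_-)=0$ and the resulting injectivity of $H_n(N)\to H_n(N,B_-)\cong\mathbb F^L$, yields $b_{n-1}(\Sigma_-)=b_{n-1}(B_-)=b_{n-1}(N)+(L-1)$.

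Combining the two computations gives the clean formula $\Delta b_{n-1}:=b_{n-1}(\Sigma_+)-b_{n-1}(\Sigma_-)=\eta-(L-1)$, and it remains to choose $\mathbb F$ in each case. If $L>2$, then $\Delta b_{n-1}\le 1-(L-1)<0$ over every field. If $L=2$ and $e(E)\ne 0$, then over $\mathbb F=\mathbb Q$ one has $\eta=0$ and $\Delta b_{n-1}=-1$. If $L=1$ and $e(E)\ne\pm1$, then $e(E)$ is either $0$ or has a prime divisor $p$, so over $\mathbb F=\mathbb Q$ (respectively $\mathbb F=\mathbb F_p$) one gets $\eta=1$ and $\Delta b_{n-1}=1$. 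In every case $b_{n-1}$ differs between the two levels, so the topology of $H^{-1}(h)$ changes. The step I expect to require the most care is precisely this last one: the change is invisible over a fixed field and becomes detectable only once $\mathbb F$ is chosen according to the divisibility of $e(E)$, which is exactly why the excluded borderline values $e(E)=\pm1$ (for $L=1$) and $e(E)=0$ (for $L=2$) are unavoidable; the very low dimensions $n=1,2$, where some auxiliary groups in the Gysin argument degenerate, I would confirm by a direct inspection consistent with the same formula.
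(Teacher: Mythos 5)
Your proof is correct, and it takes a genuinely different route from the paper's own argument (Theorem~\ref{theorem/closed_manifold_appendix} in the Appendix). The paper never computes the two level sets outright: it excises the handle neighborhoods of the critical points, notes that the complements $T_\pm$ are homeomorphic, and compares the maps $H_{n-1}(T_\pm)\to H_{n-1}(\Sigma_\pm)$ in the exact sequences of the pairs $(\Sigma_\pm,T_\pm)$; the Euler number enters through the degree of an almost-global section along $\partial U$ together with a cellular model of sphere bundles admitting a section (Lemma~\ref{lemma/section_modified}). For $L=1$ the paper takes coefficients $G=\mathbb{Z}_k$ with $k=e(E)$, for $L=2$ it perturbs the two maxima onto nearby distinct levels and combines an $\mathbb{R}$-coefficient count with Proposition~\ref{proposition/maxima}, and $L>2$ is delegated to Corollary~\ref{corollary/many_global_maxima}. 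You instead compute both sides in closed form: the Gysin sequence gives $b_{n-1}(\Sigma_+,\mathbb{F})=b_{n-1}(N)+\eta$ with $\eta=1$ exactly when $e(E)=0$ in $\mathbb{F}$, while Lefschetz duality on the compact orientable sublevel manifold $W$ (which retracts fiberwise onto $B_-$, a compact $n$-manifold with nonempty boundary, hence of homotopy dimension at most $n-1$), combined with excision for the pair $(N,B_-)$, gives $b_{n-1}(\Sigma_-,\mathbb{F})=b_{n-1}(N)+L-1$. The resulting uniform formula $\Delta b_{n-1}=\eta-(L-1)$ settles all three cases at once, makes the borderline values $e(E)=\pm1$ (for $L=1$) and $e(E)=0$ (for $L=2$) transparent, and even recovers the paper's closing remark that $L=1$, $e(E)=0$ forces $\Delta b_{n-1}=+1$; your choice of a prime divisor $p$ of $e(E)$ is also slightly cleaner than the paper's $\mathbb{Z}_k$, which is not a field for composite $k$. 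Two small remarks: the asserted diffeomorphism of $W$ with the disk bundle over $B_-$ requires care where the fibers collapse over $\partial B_-$, but your argument only uses that $W$ is a compact orientable $2n$-manifold with $\partial W=\Sigma_-$ and $W\simeq B_-$, which is immediate from the fiberwise radial retraction; and the degenerate case $n=1$ that you flag is harmless since there $E$ is trivial. What the trade costs: your computation leans on $N$ being closed and orientable (a fundamental class is needed for both the Gysin argument and the duality), whereas the paper's cut-and-paste machinery, via Proposition~\ref{proposition/maxima}, also covers non-compact bases and local maxima.
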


\begin{proof}
The case $L > 2$ was considered earlier. The proof in the other cases relies on the homology exact sequence of a pair. In the case $L = 1$, a special choice
of a coefficient group is made; specifically --- $\mathbb Z_k$, where $k = e(E)$ is the Euler number. Details are given in Appendix~\ref{section/appendix}; see 
Theorem~\ref{theorem/closed_manifold_appendix}.
 \end{proof}

\begin{corollary} \label{corollary/one_global_max}
Let $N$ be a closed orientable $n$-manifold and $\pi \colon E \to N$ be an orientable rank $n$ vector bundle over $N$. If the Euler number $e(E) \ne \pm 1$, then 
the topology of $H = K + V\circ \pi \colon E \to {\mathbb R}$ changes whenever $H$ passes 
a critical level $H^{-1}(h_c)$ with one (and only one) non-degenerate critical point. 
\end{corollary}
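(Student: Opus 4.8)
The plan is to reduce the statement to a case analysis on the index of the unique critical point and then to invoke the results already assembled in this section. First I would record how critical points of $H$ relate to those of $V$: since $K$ is fiberwise positive definite, every critical point of $H = K + V\circ\pi$ lies on the zero section and has the form $(q,0)$ with $q$ a critical point of $V$, and the Hessian of $H$ at $(q,0)$ splits into the positive definite fiber contribution coming from $K$ and the base contribution $\mathrm{Hess}\,V(q)$. Consequently $\mathrm{index}(H,(q,0)) = \mathrm{index}(V,q)$. Writing $m = \dim E = 2n$, the middle dimension is $m/2 = n$, and the hypothesis that $H^{-1}(h_c)$ carries a single critical point means that $V$ has a single critical point $q_c$ on $V^{-1}(h_c)$, of some index $j = \mathrm{index}(V,q_c)\in\{0,1,\dots,n\}$.

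If $j \ne n$, then the index $j$ of the critical point of $H$ differs from the middle dimension $m/2 = n$. Since there is a single critical point on the level (so trivially all critical points there have the same index and there is no critical point of any other index), Theorem~\ref{theorem:level} immediately yields that the homotopy types of $H^{-1}(h_c+\varepsilon)$ and $H^{-1}(h_c-\varepsilon)$ do not coincide, so the topology changes. The only remaining possibility is therefore $j = n$, that is, $q_c$ is an index-$n$ critical point of $V$ on the $n$-manifold $N$, hence a non-degenerate local maximum.

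For the local-maximum case I would split according to whether $q_c$ is a global maximum. If $q_c$ is a local but not a global maximum of $V$, then Corollary~\ref{corollary/not_global_maximum} applies directly and gives the topology change. If instead $q_c$ is a global maximum, then we are in the situation $L = 1$ of Theorem~\ref{theorem/closed_manifold}: a single global maximum lies on the critical level, the base $N$ is closed and orientable and the bundle $\pi\colon E\to N$ is orientable, so the Euler number $e(E)$ is defined, and the standing hypothesis $e(E)\ne\pm1$ is precisely condition $1)$ of that theorem. Hence the topology of $H^{-1}(h)$ changes as $h$ crosses $h_c$ in this case as well, which exhausts all the possibilities.

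The step I expect to be the only substantive point is the identification of the middle-dimension obstruction with the local-maximum case: the abstract Morse-theoretic results of Section~\ref{sec:level_sets} (Proposition~\ref{proposition/hom_change} and Theorem~\ref{theorem:level}) leave the index-$n$ case genuinely undecided, and it is exactly here that the vector-bundle structure and the Euler number must enter through Theorem~\ref{theorem/closed_manifold}. Everything else is bookkeeping: verifying that critical points of $H$ sit on the zero section with index inherited from $V$, and matching the hypotheses ($L=1$, closedness and orientability of $N$, orientability of $E$, and $e(E)\ne\pm1$) to the conditions already established in the preceding results.
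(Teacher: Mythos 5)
Your proof is correct and takes essentially the same route as the paper: the corollary is stated there without a separate argument precisely because it follows by combining Theorem~\ref{theorem:level} (index $\ne n = m/2$), Corollary~\ref{corollary/not_global_maximum} (index $n$, local but not global maximum), and case $1)$ of Theorem~\ref{theorem/closed_manifold} (unique global maximum, $L=1$, $e(E)\ne\pm1$), exactly as you assemble them. Your preliminary checks --- that every critical point of $H$ sits on the zero section with $\mathrm{index}(H,(q,0))=\mathrm{index}(V,q)$, and that the one-critical-point hypothesis forces $L=1$ in the global-maximum case --- are the same bookkeeping the paper leaves implicit.
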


For cotangent bundles, we have the following result.

\begin{corollary} \label{corollary/tangent}
If $E = T^{*}N$ is the cotangent bundle of a closed orientable manifold $N$ and the Euler characteristic $\chi(N) \ne \pm 1$, 
then the topology of the level sets for $H = K + V\circ \pi \colon T^{*}N \to {\mathbb R}$ always 
changes when $H$ passes a simple critical level.
This is the case, in particular, if 
\begin{enumerate}
\item the dimension $\dim(N)$ is odd or $\dim(N) = 2k,$ where $k$ is odd;
\item the Betti number $b_{\dim(N)/2}(N)$ is even.
\end{enumerate}
\end{corollary}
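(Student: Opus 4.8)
The plan is to derive Corollary~\ref{corollary/tangent} entirely from Corollary~\ref{corollary/one_global_max}, so the real content is reducing the hypothesis $\chi(N)\neq\pm1$ to the Euler-number hypothesis $e(T^*N)\neq\pm1$, and then checking the two sufficient conditions (1) and (2). First I would recall that for the cotangent bundle $E=T^*N$ of a closed oriented $n$-manifold the Euler number coincides (up to sign, which is irrelevant here since we only exclude $\pm1$) with the Euler characteristic: $e(T^*N)=\pm\,e(TN)=\pm\,\chi(N)$ by the Poincaré--Hopf theorem, using that $T^*N\cong TN$ via the chosen Riemannian metric and that passing to the dual bundle changes the Euler class at most by a sign. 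Hence $\chi(N)\neq\pm1$ is literally the hypothesis $e(E)\neq\pm1$ of Corollary~\ref{corollary/one_global_max}, and the main assertion follows immediately by invoking that corollary with $E=T^*N$.

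\begin{proof}
By Poincaré--Hopf, the Euler number of the tangent bundle equals the Euler characteristic, $e(TN)=\chi(N)$. Choosing a Riemannian metric identifies $T^*N$ with $TN$, and dualizing a vector bundle multiplies its Euler class by $(-1)^{\operatorname{rk}}$, so $e(T^*N)=\pm\chi(N)$; in either case $e(T^*N)=\pm1$ if and only if $\chi(N)=\pm1$. Thus the assumption $\chi(N)\neq\pm1$ is equivalent to $e(E)\neq\pm1$ for $E=T^*N$, and the first assertion is Corollary~\ref{corollary/one_global_max}.

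It remains to verify that each of the conditions (1) and (2) forces $\chi(N)\neq\pm1$. For (1), suppose first that $\dim(N)=n$ is odd. Then Poincaré duality over any field pairs $H_\ell(N)$ with $H_{n-\ell}(N)$, so the Betti numbers satisfy $b_\ell=b_{n-\ell}$ and the alternating sum $\chi(N)=\sum_{\ell}(-1)^\ell b_\ell$ has its terms cancel in pairs, whence $\chi(N)=0\neq\pm1$. If instead $n=2k$ with $k$ odd, then $n\equiv2\pmod4$, and the middle-dimensional pairing on $H_k(N;\mathbb R)$ is skew-symmetric, forcing $b_k$ to be even; combining this with the duality $b_\ell=b_{n-\ell}$ one computes $\chi(N)=2\sum_{\ell<k}(-1)^\ell b_\ell+(-1)^k b_k$, which is even, so again $\chi(N)\neq\pm1$. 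For (2), if $b_{n/2}(N)$ is even (which presupposes $n=\dim(N)$ even, say $n=2k$), then by Poincaré duality $\chi(N)=2\sum_{\ell<k}(-1)^\ell b_\ell+(-1)^k b_{k}$ is a sum of even contributions and is therefore even, excluding the values $\pm1$. In all listed cases $\chi(N)\neq\pm1$, and the conclusion follows from the first part.
\end{proof}

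I expect the only genuine subtlety to be the sign and normalization issue in identifying $e(T^*N)$ with $\chi(N)$: one must be careful that the Euler number is well defined for the oriented bundle $T^*N$ over the oriented base $N$, and that dualizing does not affect whether the value equals $\pm1$. Since the statement only excludes the two values $\pm1$, this sign ambiguity is harmless, which is why I emphasize working modulo sign throughout. The parity arguments for conditions (1) and (2) are then routine consequences of Poincaré duality and the (anti)symmetry of the intersection form in the relevant residue class of $n$ modulo $4$.
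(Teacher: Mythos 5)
Your proposal is correct and follows essentially the same route as the paper: the paper also deduces the corollary from Corollary~\ref{corollary/one_global_max} together with the identity $e(T^{*}N)=\chi(N)$ (stated as a lemma in the Appendix, with references to \cite{FF,Ha}, where you instead derive it from Poincar\'e--Hopf and the metric identification $T^{*}N\cong TN$), and your parity verifications of conditions (1) and (2) via Poincar\'e duality and the skew-symmetry of the middle intersection form for $n\equiv 2 \pmod 4$ are exactly the intended, routine arguments. Your care about the sign $e(T^{*}N)=\pm\chi(N)$ is harmless and correctly handled, since only the exclusion of the values $\pm 1$ matters.
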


\begin{remark}
\mbox{ }

\begin{enumerate}

\item 
We remark that Theorem~\ref{theorem/closed_manifold} and Corollary~\ref{corollary/one_global_max} do not hold if one does not make 
any assumptions on $N$ or $E$. 
For instance, consider the tautological line bundle over $N = \mathbb C \mathbb P^1$. 
Then, for any smooth function $V$ on
$\mathbb C {\mathbb P}^1$ (with a unique non-degenerate maximum), we have that  
$$H^{-1}(h_{\max} + \varepsilon) \cong H^{-1}(h_{\max} - \varepsilon)  \cong S^3,$$
since $\pi \colon  H^{-1}(h_{\max} + \varepsilon)\to \mathbb C \mathbb P^1$ is 
isomorphic to the Hopf bundle.

\item

By considering rank $4$ vector bundles $\pi \colon E\to S^4$, 
one can even have a situation when $H^{-1}(h_{\max} + \varepsilon)$ 
is homeomorphic to $H^{-1}(h_{\max} - \varepsilon) \cong S^7$,
but not diffeomorphic to it. 

\item 
For cotangent bundles, the situation is a bit different.\\ 
First, we note that 
Corollary~\ref{corollary/tangent} applies to all $2$-dimensional orientable surfaces 
(in this case, the Euler characteristic of $N$ is even),  parallelizable manifolds $N$ 
(in particular, to all Lie groups), and odd-dimensional manifolds.\\  
Thus, there are no counterexamples in dimensions $n = 3$ or less.\\ 
We conjecture that the $4$-manifold $\mathbb C \mathbb P^2$\#${\mathbb T}^4$  
with Euler characteristic 
\[\chi(\mathbb C \mathbb P^2\mbox{\#}{\mathbb T}^4) 
=\chi(\mathbb C \mathbb P^2) + \chi(\mathbb T^4)-\chi(S^4)=3-2=1\] 
is a counterexample.\\
We note that in the case of non-orientable $N$, we have 
$\chi(\mathbb RP^2) = 1$, but 
$H^{-1}(h_{\max} - \varepsilon)$ and $H^{-1}(h_{\max} + \varepsilon)$
are not diffeomorphic; they are diffeomorphic to $S^2 \times S^1$ and the 
lens space $L(4,1)$, respectively~\cite{Ko}. 

\end{enumerate}

\end{remark}

In fact, for a class of bundles, including bundles over spheres, 
we have a much stronger statement, which follows from Adams' result \cite{Ada}. 

\begin{proposition} \label{proposition}
Consider a rank $n$ vector bundle $\pi \colon E \to N$ and a smooth function on $E$ of the form
$$
H = K + V
\circ\pi.
$$
If the restriction $\pi \colon L \to N \setminus U$ is a trivial bundle and $n \ne 2, 4$ or $8,$ 
then the topology of $H$ level sets changes when passing a simple critical level.
\end{proposition}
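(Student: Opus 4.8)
The plan is to isolate the one case not already settled by the earlier results of this section --- that of a global maximum of $V$ --- and to treat it with the Gysin sequence together with Adams' theorem~\cite{Ada}. First I would reduce to that case. Any critical point of $H=K+V\circ\pi$ lies on the zero section over a critical point $x_c$ of $V$, and since $K$ is fibrewise positive definite one has $\mathrm{index}(H)=\mathrm{index}(V,x_c)=:\lambda\in\{0,\dots,n\}$, whereas $\dim E=2n$ has middle dimension $n$. If $\lambda\neq n$ then $\lambda\neq(\dim E)/2$ and Proposition~\ref{proposition/hom_change} (or Theorem~\ref{theorem:level}) already yields a change of homology; if $\lambda=n$ but $x_c$ is not a global maximum, Corollary~\ref{corollary/not_global_maximum} applies. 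Thus I may assume that $x_c$ is a non-degenerate global maximum, with $h_c=\max_N V$.

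Next I would identify the two levels. For $h=h_c+\varepsilon$ the Hill region is all of $N$, so $\Sigma_+:=H^{-1}(h_c+\varepsilon)$ is the unit sphere bundle $S(E)$; for $h=h_c-\varepsilon$ the Hill region is $Y:=N\setminus U$, and, exactly as in the proof of Proposition~\ref{proposition/maxima}, $\Sigma_-:=H^{-1}(h_c-\varepsilon)$ is the sphere bundle over $Y$ with the fibres collapsed over $\partial Y\cong S^{n-1}$. Using the triviality of the bundle over $Y$ to write $S(E|_Y)\cong Y\times S^{n-1}$, I would record that $\Sigma_-$ is $Y\times S^{n-1}$ capped by $S^{n-1}\times D^n$, while $\Sigma_+$ is $Y\times S^{n-1}$ capped by $D^n\times S^{n-1}$ glued through the clutching function $\phi\colon S^{n-1}\to SO(n)$ of $E$. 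In the model case $N=S^n$ one has $Y\cong D^n$, and the explicit map $(x,\theta)\mapsto\big(x,\sqrt{1-|x|^2}\,\theta\big)$ identifies $\Sigma_-$ with the standard sphere $S^{2n-1}$.

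The homological heart is then the Gysin sequence of $S^{n-1}\hookrightarrow S(E)\to N$, whose only nontrivial maps are products with the Euler class $e(E)\in H^{n}(N)$; because $E$ is trivial over $N\setminus U$, this class is supported near $x_c$ and equals $d$ times a generator, $d$ being the Euler number $\deg\big(S^{n-1}\xrightarrow{\phi}SO(n)\xrightarrow{\mathrm{ev}}S^{n-1}\big)$. For $N=S^n$ the sequence shows $H^{\bullet}(S(E))\cong H^{\bullet}(S^{2n-1})$ exactly when $d=\pm1$ (for $d=0$ one gets $S^{n-1}\times S^n$, and $|d|\ge2$ creates $\mathbb Z/d$-torsion in degree $n$); hence $\Sigma_+$ and $\Sigma_-$ differ homologically as soon as $e(E)\neq\pm1$. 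For general $N$ I would obtain the same dichotomy by collapsing the trivialized complement $N\setminus U$ to a point, turning $E$ into a rank-$n$ bundle over $S^n$ with the same clutching and reducing to the model case.

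To finish I would rule out $e(E)=\pm1$ whenever $n\neq2,4,8$. The Thom space of a rank-$n$ bundle over $S^n$ is $S^n\cup_\Phi e^{2n}$ with attaching map $\Phi\colon S^{2n-1}\to S^n$ of Hopf invariant equal to $e(E)$, so a bundle with Euler number $\pm1$ is equivalent to a map of Hopf invariant one; by Adams~\cite{Ada} this exists only for $n\in\{1,2,4,8\}$. The case $n=1$ is excluded because an orientable line bundle over an orientable $1$-manifold is trivial and has Euler number $0$. Consequently, for $n\neq2,4,8$ the Euler number can never be $\pm1$, so $\Sigma_+$ and $\Sigma_-$ carry different homology and the topology changes. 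The main obstacle I anticipate is the passage from arbitrary $N$ to the sphere: one must verify that collapsing $N\setminus U$ is compatible with the Gysin comparison, so that the absence of a homology change genuinely forces the local degree $d=\pm1$ rather than some weaker primitivity of the global Euler class --- and it is precisely here that the hypothesis of triviality over $N\setminus U$ is indispensable.
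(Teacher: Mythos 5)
Your proposal is correct in substance and shares the paper's overall skeleton --- reduce to the case of a single non-degenerate global maximum of $V$ (the only case not settled by Theorem~\ref{theorem:level} and Corollary~\ref{corollary/not_global_maximum}, since the index of $H$ at a critical point equals the index of $V$ and the middle dimension of $E$ is $n$), identify $\Sigma_\pm$ as the two cappings of the trivialized sphere bundle over $Y=N\setminus U$, force the Euler number to be $\pm1$ in the absence of a homology change, and contradict Adams --- but you implement the two key steps differently. The paper obtains the Euler-number dichotomy in the Appendix (Theorem~\ref{theorem/closed_manifold_appendix}) by chasing the exact sequences of the pairs $(\Sigma_\pm,T_\pm)$ with the specially chosen coefficient group $\mathbb Z_k$, $k=e(E)$, supported by the cellular Lemma~\ref{lemma/section_modified}, and then invokes the H-space formulation of Adams' theorem (a clutching function $\phi$ with $\deg(\mathrm{ev}\circ\phi)=\pm1$ gives a bidegree $(\pm1,1)$ map $S^{n-1}\times S^{n-1}\to S^{n-1}$). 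You instead use the Gysin sequence over the model base $S^n$ together with the Thom-space/Hopf-invariant-one formulation of Adams. Both equivalences are classical and interchangeable, and your version is in fact more self-contained here, because full triviality over $N\setminus U$ --- a hypothesis strictly stronger than the almost-global section that the Appendix works with --- trivializes the homological bookkeeping.

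Two caveats. First, compactness of $N$, a standing assumption of this subsection, is genuinely needed and your write-up never invokes it: the identification of $e(E)$ with $d$ times a generator of $H^n(N)$ and the collapse $N/(N\setminus U)\cong S^n$ presuppose $N$ closed, and the statement fails otherwise. For $N=\mathbb R$, $V=h_c-x^2$ (which satisfies Assumptions~\ref{Assumptions}), both levels $H^{-1}(h_c\pm\varepsilon)$ are two copies of $\mathbb R$, with $n=1$, $d=0$; the mechanism is that for noncompact $Y$ the boundary class $[\partial Y]$ need not vanish in $H_{n-1}(Y)$ and can absorb the discrepancy. Second, the collapsing reduction that you yourself flag as the main obstacle is indeed the soft spot of your argument, but it can be bypassed entirely by a direct computation that your trivialization makes easy: with $T_\pm\cong Y\times S^{n-1}$, the pair sequences give $H_{n-1}(\Sigma_-;\mathbb Z)\cong H_{n-1}(Y)$ (the cap $S^{n-1}\times D^n$ kills exactly the fibre class) and $H_{n-1}(\Sigma_+;\mathbb Z)\cong H_{n-1}(Y)\oplus\mathbb Z/d$, since in K\"unneth terms $\partial_*[D^n\times\{y\}]=[\partial Y]+d\,[\textup{fibre}]$ and $[\partial Y]=0$ in $H_{n-1}(Y;\mathbb Z)$ because $Y$ is compact orientable with boundary $\partial U$. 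As these groups are finitely generated, they agree iff $d=\pm1$, which is precisely the dichotomy you wanted the Gysin comparison to deliver; the rest of your argument (Hopf invariant one, exclusion of $n=1$ via vanishing Euler number of orientable line bundles) then goes through as written.
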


\begin{proof}
The statement follows from Adams' result ($S^{n-1}$ is an $H$-space only in dimensions $n = 1,2,4$ and $8$) \cite{Ada}.
\end{proof}

\begin{corollary}
If $N$ is a homotopy $n$-sphere, a situation of no topology change is possible only when 
$n = 2, 4$ or $8$.
\end{corollary}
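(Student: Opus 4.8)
The plan is to derive the statement directly from Proposition~\ref{proposition}, after using the earlier results of this section to reduce to the single case that Proposition~\ref{proposition} is built to handle. Let $N$ be a homotopy $n$-sphere, let $\pi\colon E\to N$ be a rank $n$ vector bundle, and suppose $H=K+V\circ\pi$ passes a simple critical level $H^{-1}(h_c)$ whose unique critical point is $(x_c,0)$, arising from a critical point $x_c$ of $V$ of index $k$. Since $K$ is fiberwise positive definite, the Hessian of $H$ at $(x_c,0)$ is block diagonal with a positive definite fiber block, so the index of $H$ there equals $k$, while $\dim E=2n$; thus the middle dimension is $n$.

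First I would dispose of every case except that of a global maximum. If $k\ne n$, then $H$ has a single critical point of index $k\ne\tfrac{1}{2}\dim(E)=n$ on the critical level, so Theorem~\ref{theorem:level} (with one critical point) already forces the homotopy type of the level set to change. Hence we may assume $k=n$, i.e.\ $x_c$ is a local maximum of $V$. If this local maximum is not global, Corollary~\ref{corollary/not_global_maximum} gives the topology change. Since $N$ is compact, $V$ does attain a global maximum, so the only remaining situation is the one in which $x_c$ is a global maximum and the sole critical point on its level.

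Next I would verify the hypothesis of Proposition~\ref{proposition}. Let $U\subset N$ be the Morse-chart ball around $x_c$; then $N\setminus U$ is homotopy equivalent to $N\setminus\{x_c\}$, which is contractible. For $n\ge 2$ this follows from Whitehead's theorem: using the decomposition $N=(N\setminus\{x_c\})\cup B$ with $B$ a ball about $x_c$ and $B\setminus\{x_c\}\simeq S^{n-1}$, a Mayer--Vietoris computation (together with orientability, which makes the connecting map $H_n(N)\to H_{n-1}(S^{n-1})$ an isomorphism) shows $N\setminus\{x_c\}$ is acyclic, and it is simply connected because $N$ is; being homotopy equivalent to a CW complex, it is therefore contractible. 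For $n=1$ one has $N\setminus\{x_c\}\cong\mathbb R$ directly; alternatively one may invoke the topological Poincar\'e conjecture to identify $N\setminus\{x_c\}$ with $\mathbb R^n$. Since a vector bundle over a contractible base is trivial, $\pi\colon E|_{N\setminus U}\to N\setminus U$ is trivial, and Proposition~\ref{proposition} applies: when $n\ne 2,4,8$ the topology of the level sets of $H$ changes across $h_c$. Contrapositively, no topology change forces $n\in\{2,4,8\}$.

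Finally, to justify the word \emph{only}, I would recall that these three dimensions are genuinely realized, so that the list cannot be shortened. The examples already recorded in the preceding Remark---the Hopf line bundle over $N=\mathbb C\mathbb P^1\cong S^2$, a suitable rank $4$ bundle over $S^4$, and the octonionic Hopf bundle over $S^8$---produce level sets $S^3$, $S^7$, $S^{15}$ with no change across the global maximum, exhibiting the absence of topology change in exactly the dimensions $n=2,4,8$. I expect the main obstacle to be the bundle-triviality step: one must be certain that the complement of the Morse ball carries a trivial bundle, since this is precisely the hypothesis through which Adams' Hopf-invariant-one theorem enters Proposition~\ref{proposition}. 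Everything else is routine bookkeeping of indices and an appeal to the earlier results of this section.
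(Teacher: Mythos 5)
Your proof is correct and follows essentially the route the paper intends: the corollary is stated as an immediate consequence of Proposition~\ref{proposition}, and your work --- reducing to a single global maximum via Theorem~\ref{theorem:level} and Corollary~\ref{corollary/not_global_maximum}, then observing that the complement of the Morse ball in a homotopy $n$-sphere is contractible, so the restricted bundle is trivial and Adams' theorem applies --- is precisely the (unwritten) verification of that proposition's hypothesis. The closing paragraph on realizing $n=2,4,8$ is harmless but not needed, since ``only'' here asserts necessity, which your contrapositive already establishes.
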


\section{Applications} \label{section/nbody_problem}

\subsection{Quadratic spherical pendulum} \label{subsection/qsp}

Let $S^2$ denote the unit sphere in $\mathbb R^3(x,y,z)$. Consider the Hamiltonian system on $T^*S^2$ given by the energy function
$$H = \frac{1}{2}\langle p, p \rangle + V(z),$$ 
where $V = z^2 -z/2$ is the potential. This Hamiltonian system is integrable and is called a \textit{quadratic spherical pendulum}; it naturally appears in the context of
integrable Hamiltonian systems with non-trivial monodromy \cite{Ef}; see also \cite{BF} for the necessary background. The corresponding bifurcation diagram, that is, the set of the critical values
of the energy-momentum map $(H,J) \colon T^*S^2 \to \mathbb R^2$, is depicted in Fig.~\ref{fig/qsp}; here $J$ is the angular momentum about the $z$-axis --- the 
first integral of the system.

\begin{figure}[htbp]
\begin{center}
\includegraphics[width=90mm]{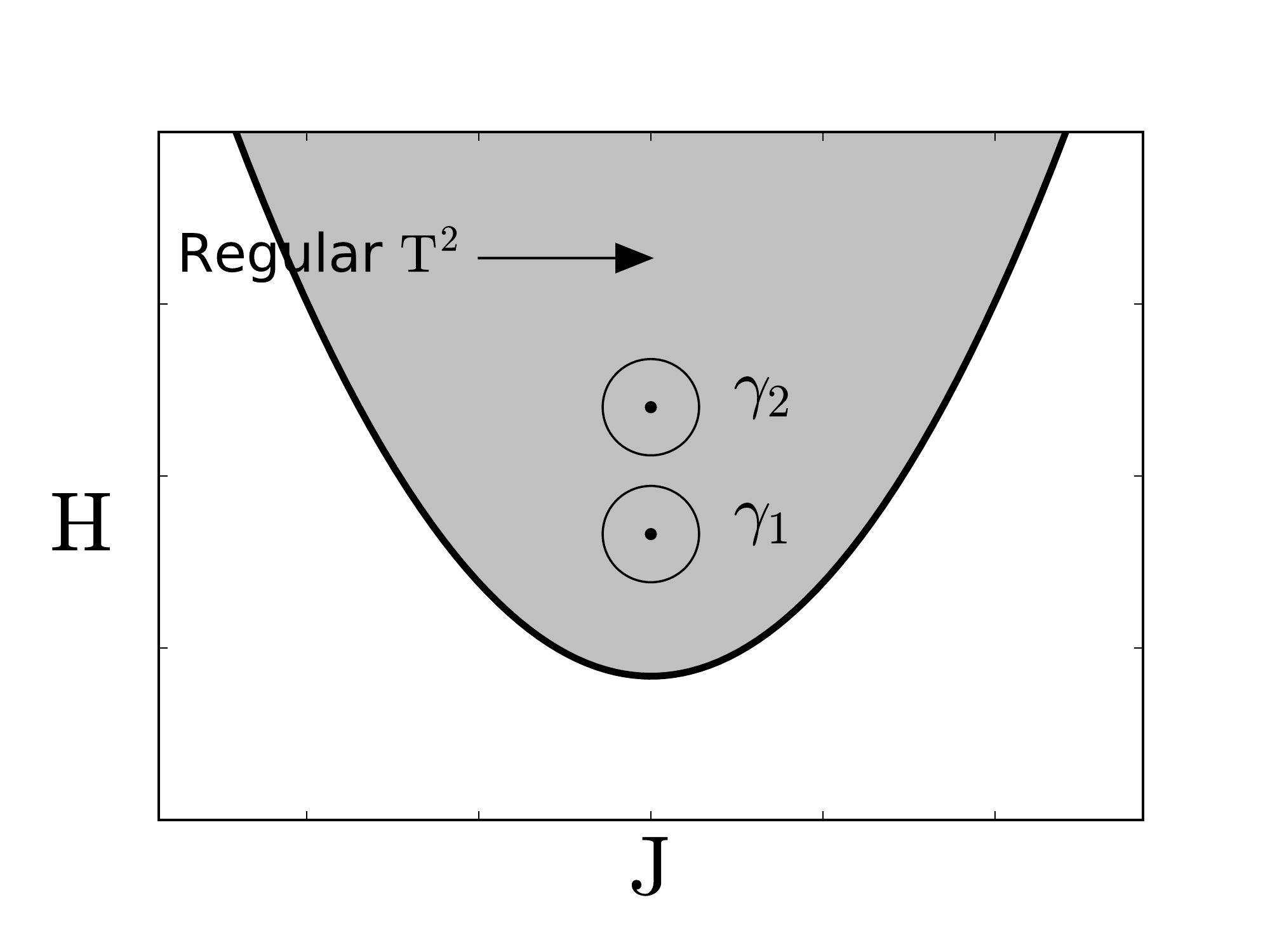}
\caption{Bifurcation diagram of the energy-momentum map $(H,J)$, and the two curves $\gamma_i$ corresponding to 
non-trivial monodromy.}
\label{fig/qsp}
\end{center}
\end{figure}

The Hamiltonian function $H$ has two non-degenerate critical points, which correspond to the two maxima of the potential $V|_{S^2}.$
These points gives rise to two critical level sets. From Theorem~\ref{theorem/closed_manifold}, we conclude that the topology of $H^{-1}(h)$ changes
when passing each of these critical levels. The same is true when we compare the topology of $H^{-1}(h)$ below and above these two critical levels. 
Indeed, the Euler characteristic $\chi(S^2) = 2$ is different from $0$ and $\pm 1$. In fact, it can be shown that these level sets are diffeomorphic to $S^2\times S^1, S^3$, 
and $\mathbb RP^3.$ We note that a similar  result applies also to the usual spherical pendulum, that is, when the potential $V(z) = z$.

One important consequence of the topology change for such system is the non-triviality of monodromy around the corresponding singular points; see \cite{Du, MBE}
for more details and the background.

\subsection{Restricted three-body problem}

The planar circular restricted $3$-body problem
with mass ratio $\mu\in(0,1)$ can be written as an autonomous 
Hamiltonian system on
$\mathbb R^2 \times \big(\mathbb R^2 \setminus \big\{(-\mu, 0),(0,1-\mu)\big\}\big)$ 
in a co-rotating reference frame; the Hamiltonian function is given by
$$
H = \dfrac{x'^2+y'^2}{2} - \frac{1}{2}(x^2+y^2) - \left(\dfrac{1-\mu}{r_1} + \dfrac{\mu}{r_2}\right),
$$
$r_i$ denoting the distances to the respective centers.
There are five equilibrium points $L_1, \ldots, L_5$. 
These are the critical points of the potential function
$$
V = - \frac{1}{2}(x^2+y^2) - \left(\dfrac{1-\mu}{r_1} + \dfrac{\mu}{r_2}\right).
$$
Each of this critical points gives rise to a bifurcation value for the energy function $H$. 
It is known that each such 
value gives rise to a topology change of the energy levels $H^{-1}(h)$, and 
it is not difficult to determine the homotopy types of these energy levels; the corresponding Hill regions are shown in Fig.~\ref{fig/Hill}. 
Below we show how this result of the topology change follows 
from the theory developed in this paper.

\begin{figure}[htbp]
\begin{center}
\hspace{-3mm} \includegraphics[width=128mm]{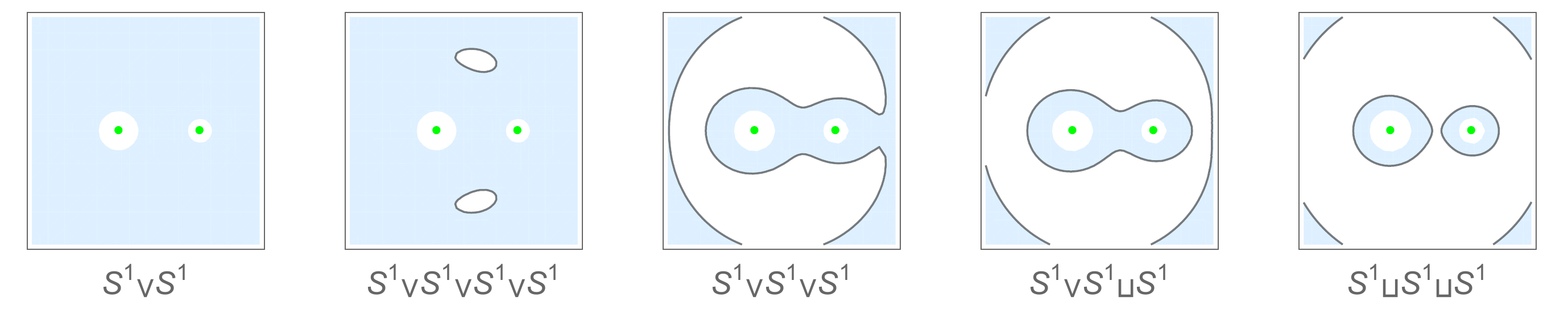}
\caption{The Hill regions and their homotopy types.}
\label{fig/Hill}
\end{center}
\end{figure}

First, we note that $L_1, L_2$ and $L_3$ are index $1$ critical points for $V$ and hence also for $H$. Hence the topology  changes
when passing these critical values by Theorem~\ref{theorem:level}. 
The critical points $L_4$ and $L_5$ are of index $2$, which is half the dimension of the 
phase space, and are on the same energy level; these points are related
by the $\mathbb Z_2 = O(2)/SO(2)$ symmetry of the problem. 
Since the base manifold is not compact and there are two critical points, the result follows.

\subsection{Planar $n$-body problem} 

Consider the Newtonian $n$-body problem in $\mathbb R^2$. The Hamiltonian of this problem is given by the function
$$
H = \sum_{i=1}^n \dfrac{\|p_i\|^2}{2m_i} - \sum_{i<j}\dfrac{Gm_im_j}{\|q_j-q_i\|},
$$
where $G$ is the gravitational constant. Reducing by the translational symmetry, 
we get a Hamiltonian system on $T^*Q$, where $Q = \mathbb R^{2n-2} \setminus \triangle$ with $\triangle$ denoting
the (reduced) collision set, that is, the set of points where $q_i = q_j$ for some $i \ne j$, reduced by translations.

Fixing a non-zero value of the angular momentum $L = \sum_i q^x_ip^y_i-q^y_ip^x_i$ and taking the quotient with respect to
the $SO(2)$ symmetry group, one gets the reduced symplectic manifold $M$ of dimension $4n-6$. The Hamiltonian $H$ restricts
to this manifold as a smooth function. Following Smale \cite{Sm1, Sm2}, we are interested in the topology of the level sets 
of $H$ on this reduced manifold. Specifically, we would like to answer the general question of whether the topology of $H^{-1}(h)|_M$ always changes
when passing a bifurcation level.

We observe that the manifold $M$ can be viewed as a vector bundle over $\mathbb R_{+} \times {\mathbb C}{\rm P}(n-2)$. Here $\mathbb R_{+} = (0, \infty)$
and each point of $\mathbb R_{+}$ corresponds to fixing the moment of inertia
$$
I = \frac{1}{2}\sum_i m_i\|q_i\|^2.
$$
The reduction of the Hamiltonian to $M$ can be rewritten (\cite{Ala}) in the following form:
$$
H = K + c^2/(4\rho^2) - U(q)/\rho,
$$
where $K$ is a bundle metric, $c \ne 0$ is the value of $L$, $I = \rho$ and $U$ denotes the reduction of the potential to the projective space $I^{-1}(1)/\mathbb S^1.$ 
From this description, it follows that the index $\lambda$ of each non-degenerate critical point is at most $2n-4< \frac{1}{2}(4n-6) = 2n-3.$ 
From Theorem~\ref{theorem:level}, we get the following result, which also follows from McCord \cite[Proposition 5.2]{McC}.

\begin{theorem} \label{theorem/planar}
Consider  the Hamiltonian  $H$ on the reduced manifold $M$. 
Assume that $V$ defines a Morse function on 
$I^{-1}(1)/\mathbb S^1 = {\mathbb C}{\rm P}(n-2) \setminus \tilde{\Delta}$, so that
$H$ is a Morse function as well. 
Assume, moreover, that a given critical level set $H^{-1}(h_c)$ contains a single 
critical point or two  critical points of the same index 
(related by the $\mathbb Z_2 = O(2)/SO(2)$ symmetry). 
Then the topology of $H^{-1}(h)$ changes when passing this critical level.
\end{theorem}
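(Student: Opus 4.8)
The plan is to deduce the statement directly from Theorem~\ref{theorem:level}, the only real input beyond it being the index estimate recorded just before the theorem. Set $m := \dim M = 4n-6$, so that the middle dimension is $m/2 = 2n-3$. The decisive fact is that every non-degenerate critical point of $H$ has Morse index $\lambda \le 2n-4 < 2n-3 = m/2$; in particular, \emph{no} critical point lies in the middle dimension, which is exactly the borderline case that Theorem~\ref{theorem:level} leaves open. Both situations allowed in the hypothesis --- a single critical point, or two critical points of a common index on $H^{-1}(h_c)$ --- are instances of the hypothesis of Theorem~\ref{theorem:level} with $N\in\{1,2\}$ critical points of one and the same index $k<m/2$.

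I would first recall why the index is bounded by $2n-4$ and not merely by the base dimension $2n-3$; this is already established above, but it is the crux of the reduction. Writing $H = K + W\circ\pi$ with $K$ fiberwise positive definite and $W = c^2/(4\rho^2) - U(q)/\rho$ a function on the base $\mathbb R_{+}\times{\mathbb C}{\rm P}(n-2)$, the critical points of $H$ are the points $(x_c,0)$ with $x_c$ critical for $W$, and the Morse index of $H$ there equals that of $W$ at $x_c$. Since $c\ne 0$, at any such point one has $U(x_c)>0$, and the single-variable function $\rho\mapsto c^2/(4\rho^2)-U(x_c)/\rho$ has a nondegenerate minimum in $\rho$; hence the $\rho$-direction contributes index $0$, and the whole index is carried by the ${\mathbb C}{\rm P}(n-2)$-directions, giving $\lambda\le\dim{\mathbb C}{\rm P}(n-2)=2n-4$.

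With this in hand the reduction is immediate. Given the critical level $H^{-1}(h_c)$, let $k$ be the common index of its critical point(s); in the two-point case the $\mathbb Z_2=O(2)/SO(2)$ symmetry conjugates the two Hessians and therefore forces equal index. Choose regular values $a<h_c<b$ so close to $h_c$ that $h_c$ is the only critical value in $[a,b]$; then $M^b_a$ contains exactly $N\in\{1,2\}$ critical points, all of index $k$ and none of any other index. Since $k\le 2n-4\ne 2n-3=m/2$, Theorem~\ref{theorem:level} applies: the homotopy types of $\partial M^b=H^{-1}(b)$ and $\partial M^a=H^{-1}(a)$ do not coincide, which is the desired topology change (the homological mechanism behind it, Proposition~\ref{proposition/hom_change}, changes a Betti number). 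As $a,b$ may be taken arbitrarily close to $h_c$, this establishes the change on passing $h_c$.

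The hard part is not the handle bookkeeping, which is entirely absorbed into Theorem~\ref{theorem:level}, but guaranteeing that Morse theory is actually available on the \emph{non-compact} reduced manifold $M$. Concretely, I must know that $H$ satisfies Assumptions~\ref{Assumptions}: completeness and the Palais--Smale condition (so that critical values are isolated and the deformation arguments underlying Proposition~\ref{proposition/hom_change} lose no topology through the ends), together with finite generation of the homology of the level sets. The ends of $M$ arise both from the collision set $\tilde{\Delta}$ and from the behaviour as $\rho\to 0,\infty$ and as configurations escape to infinity --- precisely the ``critical points at infinity'' flagged in the introduction. I would either invoke the standing hypothesis of Section~\ref{sec:natural} that such an $H$ satisfies Assumptions~\ref{Assumptions}, or verify Palais--Smale directly for $H = K + c^2/(4\rho^2) - U(q)/\rho$ on a compact energy window around $h_c$, using that the centrifugal term $c^2/(4\rho^2)$ with $c\ne 0$ confines the flow away from $\rho=0$. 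Once this is secured, the three preceding paragraphs close the proof.
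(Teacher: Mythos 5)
Your proposal is correct and takes essentially the same route as the paper: the paper derives the theorem from Theorem~\ref{theorem:level} via precisely the index bound $\lambda \le 2n-4 < 2n-3 = m/2$, obtained from writing $H = K + c^2/(4\rho^2) - U(q)/\rho$ over the base $\mathbb R_{+}\times{\mathbb C}{\rm P}(n-2)$, in the paragraph immediately preceding the statement (citing McCord's Proposition~5.2 as an alternative). The extra details you supply --- the positive $\rho$-direction of the Hessian at critical points (a nondegenerate minimum in $\rho$ since $c\ne 0$ and $U>0$) forcing $\lambda\le 2n-4$, the equality of the indices of the two $\mathbb Z_2$-related critical points, and the need to verify Assumptions~\ref{Assumptions} on the non-compact reduced manifold $M$ --- simply make explicit what the paper leaves implicit.
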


\section{Acknowledgements}

We are very grateful to A.\ Albouy for useful discussions and his comments, which led to improvement of the original version of the article. 
The authors would also like to thank participants of A.T. Fomenko's seminar 
{\em Modern Geometry Methods} for useful comments.

\section{Appendix} \label{section/appendix}

The goal of this section is to prove Theorem~\ref{theorem/closed_manifold} formulated in Section~\ref{sec:natural}. Recall that we consider a smooth function of the form
\begin{equation*}
H = K + V \circ \pi
\end{equation*}
on a rank $n$ vector bundle $\pi \colon E \to N$.
Here $N$ is a connected $n$-manifold without boundary, $K$ is a Riemannian bundle metric on $E$ and $V$ is a Morse function on $N$. 
The manifold $N$ and the bundle $\pi \colon E \to N$ are assumed to be orientable. 
In this section, the manifold $N$ is assumed to be compact.

Observe that a section $s \colon N \to E$ in general position has finitely many zeros and that, 
by homogeneity, they can be assumed to be arbitrary close to some point $x \in N$. 
This shows that $E$ admits an almost global section, which is defined and non-zero 
everywhere on $N \setminus D$ (and also on $N  \setminus \{x\}$),
where $D$ is an arbitrary small disk containing $x$. 

We observe that for the unit sphere bundle $\tilde{\pi} =\pi|_{S_1N}$, the degree of the map
$$
r \circ s \colon  \partial D \cong S^{n-1} \to S^{n-1} \cong \tilde{\pi}^{-1}(x),
$$
where $r$ is a retraction of $\tilde{\pi}^{-1}(D)$ onto the central fiber $\tilde{\pi}^{-1}(x),$ is equal 
to the intersection number of $s(N)$ and the zero section $N \subset E$.  

We will need the following lemma.

\begin{lemma} \textup{(\cite[\S 19.6B]{FF},\cite[Section 4.D]{Ha})}\quad\\
The Euler number $e(E)$, that is, the pairing of the Euler class of $E$ with $N$, is given by 
the intersection number of the zero section $N \subset E$ and 
a section in general position.
If $E = T^{*}N$ is the cotangent bundle, the Euler number equals the Euler characteristic of $N$. 
\end{lemma}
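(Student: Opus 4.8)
The plan is to prove the two assertions separately, relying on the standard intersection-theoretic description of the Euler class, so that the argument amounts to recalling and assembling classical facts (as in the cited references).

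First, for the identification of $e(E)$ with the self-intersection number of the zero section, the key observation is that the space of smooth sections of $\pi \colon E \to N$ is convex: for any two sections $s_0$ and $s_1$, the fiberwise linear interpolation $s_t = (1-t)s_0 + t s_1$ is again a section. In particular, a section $s$ in general position (which exists by transversality, and may be taken to have only finitely many nondegenerate zeros since $N$ is compact) is homotopic through sections to the zero section $s_0$. Since the intersection number of an $n$-cycle with the zero section in the $2n$-manifold $E$ is a homotopy invariant, the intersection number of $s(N)$ with $N$ equals the self-intersection number $[N] \cdot [N]$ of the zero section.

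Next I would compute this self-intersection number. A tubular neighborhood of the zero section $N \subset E$ is diffeomorphic to the total space of its normal bundle, which for the zero section is canonically $E$ itself. A standard fact from intersection theory then identifies the self-intersection number of a closed oriented submanifold with the evaluation of the Euler class of its normal bundle on the fundamental class; here this gives $[N] \cdot [N] = \langle e(E), [N] \rangle = e(E)$, and combining with the previous step yields the first assertion. Equivalently, one may phrase this via the Thom class: the Euler class is the restriction of the Thom class of $E$ to the zero section, and the pullback of the Thom class under a generic section is Poincar\'e dual to its signed zero set, whose total count is precisely the intersection number. For the second assertion, concerning the cotangent bundle, I would invoke the Poincar\'e--Hopf theorem. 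A generic vector field on $N$ (that is, a section of $TN$ in general position) has isolated zeros whose indices sum to the Euler characteristic $\chi(N)$; by the first assertion applied to $E = TN$, this sum equals $e(TN)$, so $e(TN) = \chi(N)$. Finally, a choice of Riemannian metric on $N$ furnishes an orientation-preserving bundle isomorphism $TN \cong T^*N$, whence $e(T^*N) = e(TN) = \chi(N)$.

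The individual ingredients are standard, so the step requiring the most care is the identification of the self-intersection number of the zero section with the Euler number of $E$; this is where the orientability hypotheses on both $N$ and the bundle enter, guaranteeing that the intersection numbers and the Euler class are well-defined with integer coefficients and that the signs in the count of zeros are consistent.
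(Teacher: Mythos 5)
Your proof is correct and follows essentially the same route as the paper, which offers no proof of its own but defers to the cited standard references (Fomenko--Fuchs and Hatcher), where exactly this argument appears: homotoping a generic section to the zero section through the convex space of sections, identifying the self-intersection of the zero section with $\langle e(E),[N]\rangle$ via the Thom class of the normal bundle $E$, and invoking Poincar\'e--Hopf together with the metric isomorphism $TN \cong T^{*}N$ for the cotangent case. The only point worth a remark is the phrase ``orientation-preserving bundle isomorphism'': in general $e(E^{*}) = (-1)^{\operatorname{rank}(E)}e(E)$, but here the sign is harmless since either $n$ is even or $\chi(N)=0$ for closed odd-dimensional $N$.
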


We will also need the following result.

\begin{lemma} \label{lemma/section_modified}
Let $\pi \colon E \to B$ be an orientable $n$-vector bundle over a manifold $B$ 
(possibly with boundary). Assume that there exists a global, everywhere non-zero section $s$
of $\pi$. Let $\tilde{\pi} =\pi|_{S_1B} \colon S_1B \to B$ be the unit sphere bundle of $\pi$ 
(with respect to some bundle metric). Then, for any coefficient group $G$,
the relative homology groups of $(S_1B, \partial S_1B)$ are the same as for the direct product $(B \times S^{n-1}, \partial B \times S^{n-1})$. 
Moreover, for all $b\in B$ and any $G$,
$\tilde{\pi}^{-1}(b)$ represents a non-trivial homology class in $H_{n-1}(S_1B, G)$.
\end{lemma}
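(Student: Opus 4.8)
The plan is to exploit the hypothesis that $E$ has a global nowhere-zero section in two ways at once: it forces the Euler class of the oriented bundle to vanish, and it furnishes a section of the sphere bundle that will split the resulting exact sequences. First I would normalise, replacing $s$ by $\hat s := s/\|s\|$ (with respect to the chosen bundle metric); this is a section $\hat s \colon B \to S_1B$ of the unit sphere bundle with $\tilde\pi \circ \hat s = \iden_B$. Since $s$ is defined and nonzero over all of $B$, in particular over $\partial B$, the map $\hat s$ is a section of \emph{pairs} $\hat s \colon (B,\partial B) \to (S_1B,\partial S_1B)$. Its mere existence shows that the Euler class $e(E)\in H^n(B;\mathbb Z)$ vanishes, because this class is exactly the obstruction to a nowhere-zero section.

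For the first assertion I would run the Gysin sequence of the oriented $S^{n-1}$-bundle $\tilde\pi$. Since $E$ is oriented, the Thom isomorphism holds with \emph{any} coefficient group $G$, and in its relative form for the pair $(B,\partial B)$ it yields the relative Gysin sequence
\[ \cdots \to H_k(S_1B,\partial S_1B;G)\xrightarrow{\tilde\pi_*} H_k(B,\partial B;G)\xrightarrow{\,\cap e\,} H_{k-n}(B,\partial B;G)\to H_{k-1}(S_1B,\partial S_1B;G)\to\cdots. \]
Because $e=0$, every cap-product map is the zero map for every $G$, so the sequence breaks into short exact sequences
\[ 0\to H_{j-n+1}(B,\partial B;G)\to H_j(S_1B,\partial S_1B;G)\xrightarrow{\tilde\pi_*} H_j(B,\partial B;G)\to 0. \]
The section of pairs supplies a right inverse $\hat s_*$ of $\tilde\pi_*$, so each sequence splits (as abelian groups) and
\[ H_j(S_1B,\partial S_1B;G)\cong H_j(B,\partial B;G)\oplus H_{j-n+1}(B,\partial B;G). \]
Since $H_*(S^{n-1};\mathbb Z)$ is free, the relative K\"unneth formula gives exactly this decomposition for $H_j(B\times S^{n-1},\partial B\times S^{n-1};G)$ with no Tor correction; comparing the two establishes the first claim. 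The same computation with $\partial B=\emptyset$, i.e.\ the absolute Gysin sequence, will serve for the fiber statement.

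For the second assertion I would use that absolute sequence: with $e=0$ the connecting homomorphism $\partial\colon H_0(B;G)\to H_{n-1}(S_1B;G)$ is injective, and from the way the Gysin sequence is built out of the Thom isomorphism it carries the class of a point $b\in B$ to (a generator times) the fundamental class of the fiber $\tilde\pi^{-1}(b)\cong S^{n-1}$, namely $\partial[D^n_b,S^{n-1}_b]=[\tilde\pi^{-1}(b)]$. Hence $[\tilde\pi^{-1}(b)]$ is nonzero in $H_{n-1}(S_1B;G)$ for every $b$ and every nonzero $G$; the degenerate case $n=1$ is immediate, since then $S_1B\cong B\times S^0$ and a fiber is a pair of points lying in distinct sheets.

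The routine steps are the normalisation and the K\"unneth comparison. The main work, and the place where orientability of $E$ is genuinely used, is to set up the relative Gysin sequence carefully for a manifold with boundary and to verify that it is valid with arbitrary (not merely field) coefficients via the relative Thom isomorphism, together with the precise identification of the connecting map $\partial$ with the fiber class. One must also note that the splittings for $B$ and for $\partial B$ are both induced by the single section $\hat s$ and are therefore automatically compatible, which is what legitimises passing to the relative sequence; this compatibility requires no extra argument precisely because $\hat s$ is globally defined.
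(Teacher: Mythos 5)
Your proposal is correct, but it takes a genuinely different route from the paper. The paper's proof is chain-level and elementary: it uses the section $s$ to normalize local trivializations over a triangulation $K$ of $B$, builds an explicit CW structure on $S_1B$ with cells $c_k \times \{e_1\}$ and $c_k \times D^{n-1}$ (the point $e_1$ being given by the section), and checks --- using orientability to fix the signs --- that the cellular boundary operator coincides with that of the product $B \times S^{n-1}$; the fiber-class statement is then a direct non-bounding argument (boundaries of chains $C_1 \times D^{n-1}$ hit an even number of fiber cells, and boundaries of $C_n \times \{e_1\}$ are transverse to the fibers). You instead observe that the section forces $e(E)=0$, run the homology Gysin sequence (relative for the pair $(B,\partial B)$, absolute for the fiber claim) with the Thom isomorphism taken over $\mathbb{Z}$ so that arbitrary coefficients $G$ are allowed, split the resulting short exact sequences via $\hat s_*$, and compare with the relative K\"unneth decomposition; the fiber class is detected by the injectivity of the connecting map $H_0(B;G)\to H_{n-1}(S_1B;G)$ together with the standard identification $\partial[D^n_b,S^{n-1}_b]=[\tilde\pi^{-1}(b)]$. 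All of this is sound, including your separate treatment of $n=1$ (where the Gysin/cell arguments for $S^{n-1}$ degenerate) and your correct flagging that the real work is the relative Gysin sequence over a manifold with boundary, which is standard via the long exact sequence of the triple $\bigl(D_1B,\, S_1B \cup D_1B|_{\partial B},\, D_1B|_{\partial B}\bigr)$ and the relative Thom isomorphism. The trade-off: your argument is shorter and uses uniform standard machinery, but it only produces abstract isomorphisms of homology groups, whereas the paper's construction yields an identification of cellular chain complexes --- strictly more information, which the paper actually exploits later (in the case $L=2$ of Theorem~\ref{theorem/closed_manifold_appendix}, the explicit cell structure from this lemma is used to enumerate relative $n$-cycles in $T_+$). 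So your proof establishes the lemma as stated, but would not substitute verbatim for the lemma's role in the appendix.
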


\begin{proof}
Take any simplicial decomposition $K$ of $B$ and the standard cellular decomposition 
$\{pt\} \cup D^{n-1}$ of $S^{n-1}$. 
Then construct a cellular decomposition of $S_1B$ as follows. 
For any simplex $c_k \in K$, consider the preimage $\pi^{-1}(c_k)$. It is a direct product 
$c_k \times \mathbb R^n$.
Without loss of generality, the section $s$ has the form 
$b \mapsto e_1 = (1,0, \ldots,0) \in S_b^{n-1} \subset \mathbb R^n$,
where $S_b^{n-1} = \tilde{\pi}^{-1}(b)$ is the fiber of $\tilde{\pi}$ over $b \in c_k$. 
The preimage  $\tilde{\pi}^{-1}(c_k)$ is thus 
a direct product $c_k \times S^{n-1}$. Moreover, it admits a cellular decomposition of the form 
$$c_k \times \{e_1\} \cup c_k \times D^{n-1}.$$
Since for any $k$-cell $c_k \in K$, the distinguished point $e_1$ is given by the section $s$, 
we have that the boundary operator for $S_1B$ satisfies
\begin{align*}
\partial (c_k \times \{e_1\}) &= (\partial c_k) \times \{e_1\}.
\end{align*}
Moreover, since the bundle $\pi$ and hence $\tilde{\pi}$ are trivial 
over the closure $\overline{c_k},$ we have that 
\begin{align*}
\partial (c_k \times \{D_{n-1}\}) &= \sum_{i} \pm (-1)^i c_k^i \times \{D_{n-1}\},
\end{align*}
where $\partial c_k = \sum (-1)^i c_k^i$ is the boundary of $c_k$ with the induced orientation. 
We observe that 
since $\pi$ and hence $\tilde{\pi}$ are orientable, the sign can be chosen so that 
$\partial (c_k \times \{D_{n-1}\}) = \partial c_k \times \{D_{n-1}\}.$ 
We conclude that the boundary operator is the same as for the direct product. 

To prove the last statement, consider the cell $\{b_0\} \times D^{n-1}$, 
where $b_0$ is a vertex of $K$. We observe that this cell 
is not a boundary of $C_1 \times D^{n-1}$ or $C_{n} \times \{e_1\}$ (here 
$C_1$ and $C_n$ are some $1$ and $n$ chains in $K$, respectively). 
Indeed, $\partial C_1$ consists of an even number of points and the boundary
$\partial(C_{n} \times \{e_1\})$ is transverse to the fibers.
\end{proof}

\begin{remark}
From Lemma~\ref{lemma/section_modified} it follows that if $\pi|_{S_1B} \colon S_1B \to B$ admits a global section, then the homology
groups of ${S_1B}$ can be computed using a K\"unneth formula.
\end{remark}

\begin{example}
 As an example, consider the Stiefel manifold $V_{k,2}$. It can be viewed as the unit tangent bundle
 of $S^{k-1}$. It is known that the 
 integer homology groups of $V_{2k,2}$ are the same as for the product $S^{2k-1}\times S^{2k-2}$; see \cite[Section 3.D]{Ha}.  On the other hand,
 $V_{2k,2}$ is not homeomorphic to  the product $S^{2k-1}\times S^{2k-2}$, unless $k = 1, 2$ or $4$ \cite{JW, Ada}. We note that the integer homology groups 
 of $V_{2k+1,2}$ are different from the homology groups of $S^{2k}\times S^{2k-1}$; in this case there is no global section since the base $S^{2k}$ is even-dimensional.
\end{example}

Let $e(E)$ denote the Euler number of $\pi \colon E \to  N$. We are ready to prove the desired result (Theorem~\ref{theorem/closed_manifold}).

\begin{theorem} \label{theorem/closed_manifold_appendix}
  Let $N$ be a closed orientable $n$-manifold and $\pi \colon E \to N$ be an orientable rank $n$ vector bundle over $N$. Consider 
  the function $H = K + V\circ \pi \colon E \to {\mathbb R}$ on $E$
  and let $x_1, \ldots, x_L \in V^{-1}(h_c)$ be the non-degenerate global maxima of the function $V$ on $N$.  Then
  the topology of $H$ changes when $H$ passes the critical value $h_c$ if one of the following conditions is satisfied
  
  $1)$ $L = 1$ and the Euler number $e(E)$ is not equal to $\pm 1$;
 
 $2)$ $L = 2$ and the Euler number $e(E)$ does not vanish;

$3)$ $L > 2.$
\end{theorem}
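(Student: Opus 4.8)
The plan is to compare the two level sets $\Sigma_\pm := H^{-1}(h_c\pm\varepsilon)$ by exhibiting both as the result of attaching handles to one common piece, and then to reduce the entire question to the rank of a single boundary homomorphism, which will turn out to be governed by the Euler number $e(E)$. First I would fix the geometric picture. Each global maximum $x_i$ of $V$ is a critical point of $H$ of index $n=\tfrac12\dim E$ (the $n$ fibre directions of $K$ are positive, the $n$ base directions are negative), so this is exactly the middle-dimensional situation and I would invoke the local description from the proof of Proposition~\ref{proposition/maxima}, applied simultaneously to the disjoint neighbourhoods of $x_1,\dots,x_L$. Since $h_c=\max V$, for $h=h_c+\varepsilon$ the Hill region is all of $N$, so $\Sigma_+\cong S_1E$ is the unit sphere bundle. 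Writing $N_0:=N\setminus\bigsqcup_{i=1}^L\mathrm{int}(U_i)$ and $T:=\Sigma_+|_{\pi^{-1}(N_0)}=S_1E|_{N_0}$ (homeomorphic to the corresponding piece $T_-$ of $\Sigma_-$ by Observation 3 of that proof), one obtains
\[
\Sigma_-\cong T\cup_{f_-}\textstyle\bigsqcup_i (S^{n-1}\times D^n),\qquad
\Sigma_+\cong T\cup_{f_+}\textstyle\bigsqcup_i (D^n\times S^{n-1}).
\]

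Second, I would reduce to the $(n-1)$-st homology. The relative groups $H_\bullet(\Sigma_\pm,T)\cong\bigoplus_{i=1}^L\big(\mathbb F^{\delta_n(\bullet)}\oplus\mathbb F^{\delta_{2n-1}(\bullet)}\big)$ are concentrated in degrees $n$ and $2n-1$, so as in Proposition~\ref{proposition/maxima} it suffices to detect a change in degree $n-1$. The long exact sequences of $(\Sigma_\pm,T)$ collapse to
\[
\mathbb F^L\xrightarrow{\ \partial^\pm\ }H_{n-1}(T)\longrightarrow H_{n-1}(\Sigma_\pm)\longrightarrow 0,
\]
whence $b_{n-1}(\Sigma_\pm)=\dim H_{n-1}(T)-\operatorname{rank}(\partial^\pm)$, and the problem becomes the computation of these two ranks. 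By construction $\partial^-$ sends each generator $[\{x_i\}\times D^n]$ to the class of a fibre of the sphere bundle $T\to N_0$; as $N_0$ is connected these fibre classes all coincide with one class $\phi\in H_{n-1}(T)$. Because $N_0$ is a manifold with nonempty boundary, $H^n(N_0;\mathbb Z)=0$, so $E|_{N_0}$ carries a global nowhere-zero section $s$; Lemma~\ref{lemma/section_modified} then identifies $H_\bullet(T)$ with $H_\bullet(N_0\times S^{n-1})$ and shows $\phi$ is nonzero (indeed a free $\mathbb Z$-summand integrally). Meanwhile $\partial^+$ sends $[D^n\times\{y_i\}]$ to the class $\sigma_i$ of the cross-section $f_+(\partial D^n\times\{y_i\})$ over $\partial U_i$, which I would write as $\sigma_i=s_*[\partial U_i]+d_i\,\phi$, with $d_i$ the degree of the comparison map $\partial U_i\to S^{n-1}$ between the Morse-chart section and $s$.

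The step I expect to be the main obstacle is the identity $\sum_{i}\sigma_i=e(E)\,\phi$ in $H_{n-1}(T)$ (up to orientation signs, which are immaterial here). Since the Morse sections extend as nowhere-zero sections across the disks $U_i$ whereas $s$ is defined on $N_0$, each $d_i$ is precisely the local obstruction to extending $s$ across $U_i$, and their total is the intersection number of a generic section with the zero section, i.e.\ the Euler number, by the intersection-theoretic description of $e(E)$ recalled before Lemma~\ref{lemma/section_modified}. Combined with the boundary relation $\sum_i[\partial U_i]=0$ in $H_{n-1}(N_0)$ (whose only relation this is, so that the horizontal classes $s_*[\partial U_i]$ span an $(L-1)$-dimensional subspace), this yields the displayed identity. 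Matching the purely algebraic boundary map $\partial^+$ to this geometric obstruction count is the delicate point.

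Finally, the bookkeeping is routine. Over a field $\mathbb F$ one has $\operatorname{rank}(\partial^-)=1$ since $\phi\neq0$, while from the identity $\phi\in\operatorname{im}(\partial^+)$ exactly when $e(E)\neq0$ in $\mathbb F$, giving $\operatorname{rank}(\partial^+)=L$ if $e(E)\neq0$ in $\mathbb F$ and $L-1$ otherwise; hence
\[
b_{n-1}(\Sigma_+)-b_{n-1}(\Sigma_-)=\operatorname{rank}(\partial^-)-\operatorname{rank}(\partial^+)=
\begin{cases}1-L,& e(E)\neq0\ \text{in}\ \mathbb F,\\ 2-L,& e(E)=0\ \text{in}\ \mathbb F.\end{cases}
\]
For $L>2$ both values are nonzero over every $\mathbb F$, recovering Corollary~\ref{corollary/many_global_maxima}; for $L=2$ the choice $\mathbb F=\mathbb Q$ gives $-1$ provided $e(E)\neq0$. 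For $L=1$ the Betti numbers agree over every field with $e(E)\neq0$, so there I would instead argue integrally (equivalently over $\mathbb Z_p$ for a prime $p\mid e(E)$): then $H_{n-1}(\Sigma_-)=H_{n-1}(T)/\langle\phi\rangle$ and $H_{n-1}(\Sigma_+)=H_{n-1}(T)/\langle e(E)\,\phi\rangle$ differ by the torsion group $\mathbb Z/e(E)$, which is nontrivial precisely when $e(E)\neq\pm1$, in accordance with Proposition~\ref{proposition/middle_dimension}. This settles all three cases.
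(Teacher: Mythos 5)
Your proposal is correct, and it rests on the same skeleton as the paper's proof: the same splitting $\Sigma_\pm \cong T \cup_{f_\pm}(\text{handle pieces})$, the same truncated exact sequences of the pairs $(\Sigma_\pm,T)$, and the same use of Lemma~\ref{lemma/section_modified} to certify that the fibre class $\phi$ is nontrivial. Where you genuinely diverge is in the treatment of $L\ge 2$: the paper never works with all maxima on one level --- it perturbs $V$ so that exactly one maximum remains global, disposes of that one over $G=\mathbb R$ (showing $f_+(\partial D^n\times\{y\})$ stays nontrivial in $H_{n-1}(T_+,\mathbb R)$ because the Euler number is nonzero), and lets Proposition~\ref{proposition/maxima} contribute $-1$ for each now-local maximum; the $L>2$ case is delegated to Corollary~\ref{corollary/many_global_maxima} in the same way. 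You instead compute $\operatorname{rank}(\partial^\pm)$ directly with all $L$ maxima on the level, which costs you two inputs the perturbation sidesteps: the fact that the boundary spheres satisfy the single relation $\sum_i[\partial U_i]=0$ in $H_{n-1}(N_0)$ (a Mayer--Vietoris computation using $H_n(N;\mathbb F)\cong\mathbb F$ and $H_n(N_0)=0$ --- worth writing out, as orientability and closedness of $N$ enter exactly here), and the Poincar\'e--Hopf-type identity $\sum_i d_i = e(E)$, which globalizes the single-disk degree computation the paper records just before its first lemma. In return you obtain the clean formula $b_{n-1}(\Sigma_+)-b_{n-1}(\Sigma_-)=1-L$ (resp.\ $2-L$) according as $e(E)\ne 0$ or $e(E)=0$ in $\mathbb F$, which settles $L\ge 2$ uniformly, recovers Corollary~\ref{corollary/many_global_maxima} with no perturbation, and is quantitatively sharper than the paper's case analysis. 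For $L=1$ your integral computation ($H_{n-1}(\Sigma_+;\mathbb Z)\cong H_{n-1}(N_0;\mathbb Z)\oplus\mathbb Z/e(E)$ versus $H_{n-1}(\Sigma_-;\mathbb Z)\cong H_{n-1}(N_0;\mathbb Z)$, with cancellation legitimate by finite generation) is essentially equivalent to the paper's device of taking coefficients $G=\mathbb Z_k$, $k=e(E)$: both hinge on $\phi$ being killed with multiplicity $e(E)$ on one side and multiplicity $1$ on the other; and, as you note, your own rank formula over $\mathbb F=\mathbb Z_p$ with $p\mid e(E)$ already closes this case without the integral argument. Two details to make explicit in a write-up: the graph-class formula $\sigma_i=s_*[\partial U_i]+d_i\,\phi$ and the splitting $H_{n-1}(T)\cong H_{n-1}(N_0)\oplus\mathbb F\,\phi$ require $n\ge 2$ (for $n=1$ the statement is degenerate, a tacit assumption the paper shares), and the fact that $s_*[\partial U_i]$ has no $\phi$-component is exactly what the section-adapted cellular model in the proof of Lemma~\ref{lemma/section_modified} provides, since those cycles are carried by the horizontal cells $c_{n-1}\times\{e_1\}$.
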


\begin{proof}
 The case $L > 2$ was considered earlier, so we only need to consider the cases $L = 1$ and $L = 2$.
 
$\bullet$ \textit{Case $L = 1$.}
 
 Let $x_{\max}$ be the unique non-degenerate global maximum of $V$ and $h_{\max} = H(x_{\max},0) = V(x_{\max}).$
Fix a small number $\varepsilon > 0$. 
Then the level sets $\Sigma_{\pm} := H^{-1}(h_{\max} \pm \varepsilon)$ are regular. We observe that
the level $\Sigma_{+}$ is homeomorphic to the unit sphere bundle $S_1N$ of $E$. 

Let $T_{\pm} = H^{-1}(h_c\pm\varepsilon) \setminus \pi^{-1}(U)$, where $U$ is a small open disk containing the maximum $x_{\max}$. Observe 
that the sets $T_{-}$ and $T_{+}$ are  sphere bundles over $L \setminus U$, and that these sphere bundles are isomorphic through a radial projection.
Similarly to the proof of Proposition~\ref{proposition/maxima}, we have that
\begin{align*}
\Sigma_{-} &\cong  T_{-} \cup_{f_-} S^{n-1} \times D^n \ \mbox{ and } \\ 
\Sigma_{+} &\cong  T_{+} \cup_{f_+} D^n \times S^{n-1},
\end{align*}
where the attaching maps for the respective boundary components 
are such that $f_{-}(\{x\} \times \partial D^n)$ is a fiber of the sphere bundle
$\pi \colon T_{-} \to N \setminus{U}$ and $f_{+}$ comes from the sphere bundle structure on $\Sigma_{+}$. In particular, $f_{+}(\{x\} \times S^{n-1})$ is a fiber
of $\pi \colon T_{+} \to N \setminus{U}$ and
$f_{+}(\partial D^n \times \{y\})$ is a section over $\partial U$.

Consider a part of the homology exact sequences of the pairs
$(\Sigma_\pm,T_\pm)$:
\begin{equation} \label{eq/exactseq}
G \stackrel{\partial_{*}}{ \longrightarrow} H_{n-1}(T_\pm, G) \longrightarrow
H_{n-1} \big(\Sigma_\pm, G \big)  \longrightarrow 0.
\end{equation}
Observe that the map $H_{n-1}(T_{-}, G) \to H_{n-1} \big(\Sigma_-, G \big)$ 
has a non-trivial kernel,
given by the homology class of the fiber $f_{-}(\{x\} \times \partial D^n)$; this
homology class is non-trivial in $H_{n-1}(T_{-}, G)$ by Lemma~\ref{lemma/section_modified}. 
On the other hand, this map is surjective.

The bundle $\pi \colon T_{+} \to N \setminus{U}$ also has a global section. We denote it by $f$. We observe that
the restriction of $f_{+}^{-1} \circ f$ to the boundary sphere $\partial U \cong S^{n-1}$ is a map of degree
$k\ne \pm 1$, where $k$ is the Euler number. (Strictly speaking,  $f_{+}^{-1} \circ f$ maps into $D^n \times S^{n-1}$, but this space deformation retracts onto
 $S^{n-1}$.) The section $\partial D^n \times \{y\}$ gives rise to a map of degree $0$.

Setting $G = \mathbb Z_k$ for $k \ne 0$ and $G = \mathbb R$ for $k = 0$,
we get that $f(\partial U)$ and $f_{+}(\partial D^n \times \{y\})$ are of the same $G$-homology class 
in $\partial T_{+}$. 
Hence $[f_{+}(\partial D^n \times \{y\})]$ is trivial in the group $H_{n-1}(T_{+},G)$ and 
$$
H_{n-1} \big(\Sigma_+, {G}\big) \cong  H_{n-1}(T_{+},G)
$$
when $G = \mathbb Z_k$ (or $\mathbb R$ when $k = 0$). Since $T_{-}$ and $T_{+}$ are homeomorphic and also compact,
it follows that the $(n-1)$-th homology groups $H_{n-1}(\Sigma_+, G)$ and
$H_{n-1}(\Sigma_-, G)$ are not isomorphic for $G = \mathbb Z_k$. 

$\bullet$ \textit{Case $L = 2$.}
 
We shall assume that the two maxima are on close, but 
 different level sets of $V$,
 and that we are passing both of these maxima at the same time. Then one maximum becomes local and the other  global.
 
Consider what happens when we pass the second (global) maximum. 
 We observe that there is an almost 
 global section $f$ such that the restriction of $f_{+}^{-1} \circ f$ to the boundary sphere $\partial U \cong S^{n-1}$ is a map of degree
$k\ne \pm 1$, where $k$ is the Euler number. By the assumption, $k \ne 0$. 
Observe that $f(\partial U)$ is trivial in the group $H_{n-1}(T_{+},G)$. 
However, if $G = \mathbb R$, then  $f_{+}(\partial D^n \times \{y\})$
is non-trivial in $H_{n-1}(T_{+},G)$. Indeed, using a suitable cellular decomposition of $T_{+}$ (see Lemma~\ref{lemma/section_modified}),
we get that any relative $n$-cycle in $T_{+}$ is given by linear combinations of

a) The products of relative $1$-cycles in $(N\setminus U,\partial U)$ and $D^{n-1}$, where $D^{n-1} \cup\{pt\} = S^{n-1}$;

b) The section $f(N\setminus U)$. 

But the boundary of any $n$-cycle as in a) vanishes in $\partial U \times S^{n-1}$, whereas 
$\partial f(N\setminus U)$ and $f_{+}(\partial D^n \times \{y\})$ are different homology cycles in $\partial U \times S^{n-1}$ since $k \ne 0$. 
It follows that $f_{+}(\partial D^n \times \{y\})$ is not a boundary and that 
$$
b_{n-1}(\Sigma_+, \mathbb R) +1 =   b_{n-1}(T_{+},\mathbb R).
$$
Since 
$$
b_{n-1}(\Sigma_-, \mathbb R) +1 =   b_{n-1}(T_{-},\mathbb R)
$$
and since $T_{-}$ and $T_{+}$ are homeomorphic and also compact,
we get that $b_{n-1}(\Sigma_+, \mathbb R) = b_{n-1}(\Sigma_-, \mathbb R).$
But Proposition~\ref{proposition/maxima}  implies that the other (local) maximum contributes to the change of the $(n-1)$-Betti number,
also when $G = \mathbb R$. The result follows.
\end{proof}

\begin{remark}
Assume that the maxima $x_1, \ldots, x_L$ are not located on one critical level, but belong 
 to (the interior of) the set $V^{-1}[a,b], \ a < b,$ that contains no other critical points. In this case, the same result holds if one compares the topology
 of $H^{-1}(a)$ with that of $H^{-1}(b)$; cf. Subsection~\ref{subsection/qsp}.
\end{remark}

\begin{remark}
 We note that if $L = 1$ and the Euler class vanishes, then the $(n-1)$-Betti number changes according to 
 $$
b_{n-1}(H^{-1}(h_c+\varepsilon),\mathbb F) = b_{n-1}(H^{-1}(h_c-\varepsilon),\mathbb F) + 1,
$$
where $\mathbb F$ is a field; cf. Proposition~\ref{proposition/maxima}. 
\end{remark}

\end{document}